\documentclass[10pt]{article}
\usepackage{color}
\usepackage{array}
\usepackage{algpseudocode}
\usepackage{algorithm}
\usepackage{amssymb}   
\usepackage{amsfonts}
\usepackage{amsthm}    
\usepackage{amsmath}   
\usepackage{stmaryrd}  
\usepackage{titletoc}  
\usepackage{mathrsfs}  
\usepackage{graphicx}
\usepackage{hyperref}
\usepackage[normalem]{ulem}
\usepackage{cancel}

\usepackage{todonotes}

\newlength{\defbaselineskip}
\newcommand{\setlinespacing}[1]%
           {\setlength{\baselineskip}{#1 \defbaselineskip}}

\theoremstyle{plain}
\newtheorem{thm}{Theorem}[section]
\newtheorem{lem}[thm]{Lemma}
\newtheorem{prop}[thm]{Proposition}
\newtheorem{cor}[thm]{Corollary}

\theoremstyle{definition}
\newtheorem{defn}[thm]{Definition}
\newtheorem{ass}[thm]{Assumption}
\newtheorem{rmk}[thm]{Remark}
\newtheorem{example}[thm]{Example}

\newcommand{\RN}[1]{%
  \textup{\uppercase\expandafter{\romannumeral#1}}%
}

\DeclareMathOperator*{\esssup}{esssup}
\DeclareMathOperator*{\essinf}{essinf}
\newcommand{\ud}{\mathrm{d}}

\newcommand{\cH}{\mathcal{H}}

\newcommand{\cB}{\mathcal{B}}
\newcommand{\cA}{\mathcal{A}}

\newcommand{\bE}{\mathbb{E}}

\newcommand{\bP}{\mathbb{P}}
\newcommand{\bR}{\mathbb{R}}

\newcommand{\sF}{\mathscr{F}}

\usepackage{float}
\makeatletter\@addtoreset{equation}{section} \makeatother
 \allowdisplaybreaks
\begin{document}

\title{Optimal Resource Extraction under Distribution Learning and Infinite-Horizon Stochastic Hamilton--Jacobi Equations\thanks{The first and third author gratefully acknowledge support from the DFG CRC/TRR 388 ``Rough Analysis, Stochastic Dynamics and Related Fields'' -- Project ID 516748464. The authors also acknowledge the support of the Banff International Research Station (BIRS) for the Workshop [24w5257] ``Modeling, Learning and Understanding: Modern Challenges between Financial Mathematics, Financial Technology and Financial Economics'', November 10--15, 2024, where part of this work was carried out.}}

\author{ Ulrich Horst\footnotemark[1] \and Jinniao Qiu\footnotemark[2] \and Yang Yang\footnotemark[3]  }
\footnotetext[1]{Department of Mathematics and School of Business and Economics, Humboldt-Universität zu Berlin, Unter den Linden 6, 10099 Berlin, Germany. \textit{E-mail}: \texttt{horst@math.hu-berlin.de} (U. Horst).}
\footnotetext[2]{Department of Mathematics \& Statistics, University of Calgary, 2500 University Drive NW, Calgary, AB T2N 1N4, Canada. \textit{E-mail}: \texttt{jinniao.qiu@ucalgary.ca} (J. Qiu).}
\footnotetext[3]{Department of Mathematics, Humboldt-Universität zu Berlin, Unter den Linden 6, 10099 Berlin, Germany. \textit{E-mail}: \texttt{yang.yang.1@hu-berlin.de} (Y. Yang).}

\maketitle

\begin{abstract} 
We study an infinite-horizon stochastic control problem for the optimal exploitation of an exhaustible resource with unknown total reserves. Information is generated both endogenously through continued extraction without depletion and exogenously through an external information flow. This interaction makes the natural problem non-Markovian and time-inconsistent. We show that it nevertheless admits an equivalent time-consistent formulation with the same optimal controls. The associated value function is characterized as the unique viscosity solution of a stochastic Hamilton--Jacobi equation with random coefficients. We prove comparison on the infinite horizon through a Snell-envelope-based strict-contact argument and establish uniqueness by an independent Brownian regularization and a BSDE correction, avoiding piecewise Markovian approximations. Finally, we identify the deterministic benchmark and show that, under persistent reserve uncertainty, the rescaled stochastic value function converges to a pathwise deterministic control problem, with the long-run optimal extraction rate determined by the asymptotic hazard rate of the limiting reserve distribution.
\end{abstract}

\textbf{Mathematics Subject Classification (2020):} 93E20, 49L12, 49L25, 60H15, 91B70 

\textbf{Keywords:} optimal extraction, time-inconsistency,  stochastic Hamilton--Jacobi equation, infinite horizon control, viscosity solution

\section{Introduction}\label{sec:introduction}

The optimal exploitation of exhaustible resources is a classical problem in economics, going back to Hotelling's analysis of the intertemporal trade-off between current extraction and the scarcity value of resources left in the ground \cite{Hotelling1931}. A central difficulty is that the size of the economically recoverable resource stock is typically not known when extraction decisions are made. Reserve uncertainty matters because extraction and exploration not only deplete known reserves but also generate information about the remaining stock. This interaction between depletion and learning was already central to the early literature. Loury \cite{Loury1978} studies the optimal exploitation of an unknown reserve, Hoel \cite{Hoel1978} analyzes how information revealed through extraction affects production under reserve uncertainty, and Swierzbinski and Mendelsohn \cite{SwierzbinskiMendelsohn1989} develop a continuous-time Bayesian model of information acquisition about an exhaustible resource. More recently, Jakobsson et al.~\cite{JakobssonEtAl2012} use Bayesian updating to study how exploration changes beliefs about the distribution and total size of oil resources. In general, such beliefs are formed from an accumulating history of observations, 
making a genuinely path-dependent information structure economically natural rather than a purely technical generalization.

The informational role of exploration is closely connected to the stochastic dynamics of reserve discovery studied in the more recent mathematical literature. Ludkovski and Sircar \cite{LudkovskiSircar2012,LudkovskiSircar2015}, for example, model exploration as a controlled stochastic discovery mechanism that replenishes existing reserves. This provides a natural link to more recent work on strategic interaction, depletion, and market exit. Chan and Sircar \cite{ChanSircar2015,ChanSircar2017} analyze Bertrand and Cournot mean field games and extensions involving shale production and renewable substitutes, Graber and Bensoussan \cite{GraberBensoussan2018} consider mean field games with absorption, and Graewe, Horst, and Sircar \cite{GraeweHorstSircar2022} study deterministic mean field games of resource exploitation with depletion-induced exit. Related exit mechanisms also arise in portfolio liquidation and market-entry models; see \cite{HorstDropOut,HorstEntryLiquidation}.

Against this background, we return to the problem of uncertain total reserves within a modern stochastic-control framework. We consider a single producer who knows the distribution of total reserves but not their realization, and whose extraction activity ends once the resource is exhausted. In addition to learning from continued extraction, the producer receives an exogenous flow of information that changes her assessment of the available stock. The model therefore combines two distinct learning mechanisms. {\sl Endogenous learning} occurs through depletion: every additional unit extracted without exhaustion reveals that the stock must be larger than previously known. {\sl Exogenous learning} is driven by information outside the extraction decision, such as geological surveys, exploratory drilling, or reports about nearby reserves. We model this information through a filtration generated by a Wiener process, while allowing reserve beliefs to depend on the entire history of observed signals. Thus, even though the underlying information flow is Brownian, the resulting control problem may be genuinely non-Markovian.

The two forms of learning affect the control problem in fundamentally different ways. Exogenous information makes the coefficients of the associated Hamilton--Jacobi (HJ) equation random, while endogenous learning changes the producer's conditional distribution of reserves as extraction proceeds. Continued extraction without depletion leads the producer to condition on the event that the true resource stock exceeds the amount already extracted. Since this conditioning event changes with the state, the usual dynamic-programming structure breaks down and the original objective becomes time-inconsistent. In the present model, however, this time inconsistency has a particularly simple structure: the normalization induced by conditioning changes the value of future revenues, but not the ranking of admissible controls at a fixed time and state. We exploit this observation to construct an equivalent time-consistent problem with the same optimal strategies. Thus, in the multi-person interpretation of time-inconsistent control, successive selves share the same preference ordering. This connects our setting to the equilibrium-control literature \cite{ekeland2010golden,wei2017time}, while allowing us to recover a standard optimization problem rather than replacing optimal controls by equilibrium controls. The resulting value function is characterized by a stochastic HJ equation with random, potentially path-dependent coefficients. Although this equation is not explicitly solvable, its long-run behavior admits a remarkably explicit characterization: after rescaling, the value function converges to a pathwise deterministic limit, and the optimal extraction rate is determined explicitly by the tail hazard rate of the limiting reserve distribution. Thus, residual uncertainty about large reserve levels directly governs long-run production.

The value function of the resulting time-consistent problem is characterized by an infinite-horizon stochastic Hamilton--Jacobi equation. Its viscosity analysis is more delicate than in the deterministic setting. A pathwise contact point between a candidate solution and a test function may depend on future randomness and therefore fail to be a stopping time. Following \cite{qiu_stochastic_hjb,qiu_uniqueness_2019}, we formulate tangency instead through conditional expectations and optimal stopping. At the same time, stochastic test functions are semimartingale random fields and contain martingale components in addition to spatial derivatives and drift. Classical doubling-of-variables arguments control spatial jets but not these martingale terms, so the standard deterministic comparison proof cannot be transferred directly; see \cite{ekren2016viscosity-1,ekren2016viscosity-2,qiu_stochastic_hjb,qiu_uniqueness_2019}. Our comparison proof therefore uses the stochastic strict-contact idea of \cite{qiu_stochastic_hjb,qiu_uniqueness_2019}: a Snell-envelope argument yields a finite stopping time at which stochastic tangency can be established, and a martingale correction produces the required test function. The proof requires neither the strong regularity assumptions of \cite{qiu_stochastic_hjb} nor the penalization approximations of \cite{qiu_uniqueness_2019}. The infinite horizon creates the additional possibility that a violation of the desired ordering escapes to temporal or spatial infinity, which we exclude through suitable one-sided asymptotic conditions.

The uniqueness argument is completed by a regularization procedure that avoids the piecewise Markovian approximations used in related stochastic HJ theories \cite{ekren2016viscosity-1,ekren2016viscosity-2,qiu_uniqueness_2019}. We add a small independent Brownian perturbation to the state dynamics, which turns the auxiliary equations into uniformly superparabolic stochastic PDEs and provides the spatial regularity needed for comparison. After an appropriate spatial shift and a linear BSDE correction, the corresponding value functions yield regular upper and lower barriers whose gap vanishes with the perturbation. This gives uniqueness by a squeezing argument and, at the same time, preserves the genuinely non-Markovian structure of the original problem. 

We finally turn to the deterministic benchmark and to the long-run effect of learning. When the conditional survival function is deterministic and time-independent, the value function separates into a discount factor and a stationary component. We identify the latter both as the unique classical solution of a first-order boundary-value problem and as the value function of the corresponding deterministic control problem. In the general model, conditional survival probabilities converge pointwise as information accumulates. Under {\sl uniform} convergence and additional regularity assumptions, the appropriately rescaled stochastic value function converges to the pathwise classical value function associated with the limiting information. If, in addition, the limiting hazard rate converges for large resource levels, then the optimal extraction rate converges as well, with an explicit limit determined by that hazard rate. The tail behavior of residual reserve uncertainty has a direct economic meaning: it determines the producer's long-run extraction behavior.

In summary, we formulate a resource-extraction problem with unknown total reserves that combines endogenous learning through depletion with exogenous learning through a potentially path-dependent Brownian information flow. We establish existence, comparison, and uniqueness for the associated infinite-horizon stochastic Hamilton--Jacobi equation and characterize the long-run behavior of the value function and optimal feedback. Section~\ref{sec:model} introduces the model and its time-consistent reformulation, Section~\ref{section_sto_con} develops the stochastic viscosity theory, and Section~\ref{subsection_special_case} studies the deterministic benchmark and asymptotic behavior.


\medskip \textbf{Frequently used notation.} Let $C(\bR^+)$ denote the space of continuous functions on $\bR^+$, and let $C_0(\bR^+)$ denote the subspace of functions satisfying $\lim_{x\rightarrow \infty} f(x)=0$.
For $p\in [1,\infty)$, let $\mathcal{S}^p(C_0(\bR^+))$ be the space of $(\sF_t)_{t\geq 0}$-adapted $C_0(\bR^+)$-valued processes $u$ such that, for a.e.~$\omega\in\Omega$, $u(\omega)$ belongs to $C(\bR^+;C_0(\bR^+))$ and
\begin{equation*}
\| u \|_{\mathcal{S}^p(C_0(\bR^+))}:=\left\| \sup_{(t,x)\in \bR^+\times\bR^+} | u(t,x) | \right\|_{L^p(\Omega, \sF, \bP)}<\infty.
\end{equation*}
We define $\mathcal S^p(C_0(\bR^+)\oplus \bR)$ as the space of functions $v: \Omega\times \bR^+\times\bR^+\to \bR$ admitting a unique decomposition 
\[
v(t,x) = v_0(t,x) + v_{\infty}(t),
\]
where $v_0\in \mathcal S^p(C_0(\bR^+))$ and $v_{\infty}$ is a real-valued $(\sF_t)_{t\geq 0}$-adapted continuous process such that
\[
\| v \|_{\mathcal S^p(C_0(\bR^+)\oplus \bR)} := \|v_0\|_{\mathcal S^p(C_0(\bR^+))} + \left\| \sup_{t\ge 0} |v_{\infty}(t)| \right\|_{L^p(\Omega,\sF,\bP)}<\infty.
\]
For $p\in [1,\infty)$, let $\mathcal{L}^p(C_0(\bR^+))$ 
denote the space of $(\sF_t)_{t\geq 0}$-adapted $C_0(\bR^+)$-valued processes $\mathcal{X}$ such that
\begin{align*}
\|\mathcal{X}\|_{\mathcal{L}^p(C_0(\bR^+))}:&=\left\| \left(\int_0^{\infty} \!  \sup_{x\in\bR^+}| \mathcal{X}(t,x) |^p  dt \right)^{1/p} \right\|_{L^p(\Omega, \sF, \bP)}<\infty.
\end{align*}
Similarly, $\mathcal L^p(C_0(\bR^+)\oplus \bR)$ 
is defined as the space of adapted functions $\mathcal{Y}: \Omega\times \bR^+\times\bR^+\to\bR$ admitting a unique decomposition
\[
\mathcal{Y}(t,x) = \mathcal{Y}_0(t,x) + \mathcal{Y}_{\infty}(t),
\]
where $\mathcal{Y}_0\in\mathcal L^p(C_0(\bR^+))$ 
and $\mathcal{Y}_{\infty}$ is 
a real-valued $(\sF_t)_{t\geq 0}$-adapted process satisfying
\begin{align*}
\| \mathcal Y \|_{\mathcal L^p(C_0(\bR^+)\oplus \bR)} &:
    = \| \mathcal Y_0 \|_{\mathcal L^p(C_0(\bR^+))} + \|\mathcal Y_{\infty}\|_{L^p(\Omega\times\bR^+,\bP\otimes dt)}<\infty.
\end{align*}
 With these norms, 
 \[
    \big(\mathcal{S}^p(C_0(\bR^+)), \| \cdot \|_{\mathcal{S}^p(C_0(\bR^+))}\big), 
    \quad \mbox{and} \quad
    \big(\mathcal{S}^p(C_0(\bR^+)\oplus \bR), \| \cdot \|_{\mathcal{S}^p(C_0(\bR^+)\oplus \bR)}\big), 
\]
as well as
\[
    \big(\mathcal{L}^p(C_0(\bR^+)), \| \cdot \|_{\mathcal{L}^p(C_0(\bR^+))}\big)
    \quad \mbox{and} \quad
     \big(\mathcal{L}^p(C_0(\bR^+)\oplus \bR), \| \cdot \|_{\mathcal{L}^p(C_0(\bR^+)\oplus \bR)}\big),
 \]
are Banach spaces. 

\section{The model}\label{sec:model}

We consider a monopolistic oil or gas producer who optimally exploits an exhaustible resource. Unlike standard extraction models, we assume that the total resource capacity $\overline X$ is random and that the producer knows its distribution but not its realization. The random variable $\overline X$ is defined on a filtered probability space $(\Omega,\sF,(\sF_t)_{t\geq0},\bP)$ satisfying the usual conditions, and expectations under $\bP$ are denoted by $\bE[\cdot]$. For simplicity, we assume that $\overline X$ is nonnegative, has an atomless law, and has unbounded support. Thus, for every $x\geq0$,
\[
    \bP_x(\cdot):=\bP(\cdot\mid x<\overline X)
\]
is well defined, with conditional expectations
\[
    \bE_{\bP_x}[\cdot\mid\sF_t]
    =
    \frac{\bE[\cdot\,1_{\{x<\overline X\}}\mid\sF_t]}{\bP(x<\overline X|\sF_t)}.
\]

The filtration $(\sF_t)_{t\geq0}$ models the arrival of information that affects the producer's assessment of the resource stock. We assume that $(\sF_t)_{t\geq0}$ is generated by a Wiener process $W$ and set
$$
    \sF_\infty:=\bigvee_{t\geq0}\sF_t \subseteq \sF.
$$
The information flow may represent geological surveys, drilling outcomes, technological developments, and other signals that affect reserve estimates and subsequent extraction decisions; see \cite{DeshmukhPliska1980,SwierzbinskiMendelsohn1989} for more details. Although the filtration is Brownian, the conditional distribution of reserves may depend on the entire observed history rather than on a finite-dimensional current state. Our formulation therefore accommodates genuinely non-Markovian control problems with random coefficients.

\begin{rmk}
For our application, it is useful to think of
$$
    \overline X=X+Y,
$$
where $X$ is independent of $\sF_\infty$ and $Y$ is $\sF_\infty$-measurable. The variable $Y$ represents the learnable component of reserves, whose uncertainty is progressively resolved through the information flow, whereas $X$ represents residual geological uncertainty that cannot be resolved in this way. Thus, information changes the producer's assessment of total reserves through the conditional distribution of $Y$, while uncertainty about $X$ remains irreducible. This distinction allows uncertainty about the resource stock to persist even in the long run.
\end{rmk}

The benchmark case in which $\overline X$ is known initially corresponds to the model studied in \cite{GraeweHorstSircar2022}. More generally, $\overline X$ need not be $\sF_\infty$-measurable. In this case, the information flow gradually reduces uncertainty about the resource capacity through the conditional survival function
\[
    \hat F(t,x):=\bP(\overline X>x\mid\sF_t),
\]
while residual uncertainty may persist even in the long run. We impose the following standing assumption.

\begin{ass}\label{assumption}
The random variable $\overline X$ has finite expectation. Moreover, the survival function is a.s.~strictly positive,
\[
    \hat F(t,x)>0
    \quad\mbox{a.s.~for all }(t,x)\in\bR^+\times\bR^+,
\]
and Lipschitz continuous in the state variable: there exists $L_{\hat F}\geq0$ such that
\[
    \esssup_{\omega\in\Omega}
    |\hat F(t,x)-\hat F(t,\overline x)|
    \leq L_{\hat F}|x-\overline x|.
\]
\end{ass}

\begin{rmk}
Since $0\leq\hat F\leq1$ and $\bE[\overline X]<\infty$, Fubini's theorem yields, for every $p>1$ and $T>0$,
\[
    \bE\left[\int_0^T\int_0^\infty \hat F^p(t,x)\,dx\,dt\right]
    \leq
    \bE\left[\int_0^T\int_0^\infty \hat F(t,x)\,dx\,dt\right]
    \leq
    T\bE[\overline X]
    <\infty.
\]
This integrability condition will be used in the uniqueness proof for viscosity solutions.
\end{rmk}

\subsection{The time-inconsistent control problem}

The monopolist extracts the resource at a progressively measurable rate $\alpha:[0,\infty)\to[0,1]$. The set of admissible extraction rates is denoted by $\cA$. Given an amount $x\geq0$ already extracted at or before time $t \geq 0$, cumulative extraction evolves as
\begin{equation}\label{stochastic-state-process}
X_s^{t,x;\alpha}
=
x+\int_t^s\alpha_u\,du,
\quad s\geq t,
\end{equation}
until the resource is depleted. The depletion time (as seen from time zero) is
\[
    \tau^{x,\alpha}
    :=
    \inf\{t\geq0:X_t^{0,x;\alpha}\geq\overline X\},
    \qquad
    \inf\varnothing:=\infty.
\]
We emphasize that $\tau^{x,\alpha}$ is a random time but need not be a stopping time.

Following \cite{ChanSircar2015,ChanSircar2017,GraeweHorstSircar2022}, we fix a discount rate $r>0$ and assume that the inverse demand function is $p(a)=1-a$ for production rates $a\in[0,1]$. The monopolist's discounted revenue is therefore
\[
    J(0,x,\alpha)
    =
    \int_0^{\tau^{x,\alpha}}e^{-rt}\alpha_t(1-\alpha_t)\,dt
    =
    \int_0^\infty e^{-rt}
    {\bf 1}_{\{\tau^{x,\alpha}>t\}}
    \alpha_t(1-\alpha_t)\,dt.
\]
When computing expected revenues we need to account for the distinct learning mechanisms. For any initial state $x$, the monopolist knows that $\overline X>x$ and therefore updates her belief about total reserves to $\bP_x$. Moreover, she has access to the exogenous information $\sF_0$. The value function at time $t=0$ is hence given by,
\begin{align*}
V(0,x)
&:=
\esssup_{\alpha\in\cA}
\bE_{\bP_x}\left[
\int_0^{\tau^{x,\alpha}}
e^{-rt}\alpha_t(1-\alpha_t)\,dt
\Big|\sF_0
\right]
\\
&=
\esssup_{\alpha\in\cA}
\bE\left[
\int_0^\infty
e^{-rt}
\frac{\alpha_t(1-\alpha_t)}{\hat F(0,x)}
1_{\{t<\tau^{x,\alpha}\}}\,dt
\Big|\sF_0
\right]
\\
&=
\esssup_{\alpha\in\cA}
\bE\left[
\int_0^\infty
\frac{\hat F(t,X_t^{0,x;\alpha})}{\hat F(0,x)}
e^{-rt}\alpha_t(1-\alpha_t)\,dt
\Big|\sF_0
\right],
\end{align*}
where the second equality follows from the adaptedness of the extraction strategy and Fubini's theorem for conditional expectations. The term $\hat F(t,X_t^{0,x;\alpha})$ captures both forms of learning: exogenous learning through the information contained in $\sF_t$ and endogenous learning through the fact that the resource has not yet been depleted at the cumulative extraction level $X_t^{0,x;\alpha}$. For a general initial condition $(t,x)$, conditional on the resource not yet being depleted, the monopolist evaluates
\begin{align}
J(t,x;\alpha)
=
\bE\left[
\int_t^\infty
\frac{\hat F(s,X_s^{t,x;\alpha})}{\hat F(t,x)}
e^{-rs}\alpha_s(1-\alpha_s)\,ds
\Big|\sF_t
\right],
\label{dynamic-functional_ti}
\end{align}
and the corresponding stochastic value function is
\begin{align}
V(t,x)
=
\esssup_{\alpha\in\cA}J(t,x;\alpha).
\label{dynamic-value-functional_ti}
\end{align}
Because the normalizing factor $\hat F(t,x)$ changes with the current state and information set, the family of continuation problems does not satisfy the standard additive recursion underlying dynamic programming. The control problem is therefore time-inconsistent. 



The following examples contrast a finite-dimensional learning model with a genuinely path-dependent one.

\begin{example}\label{example:conditional-lognormal}
Let
\[
Y
=
\exp\left(
\log m
-\frac12\int_0^\infty\overline\sigma_s^2\,\ud s
+\int_0^\infty\overline\sigma_s\,\ud W_s
\right),
\qquad m>0,
\]
where $\overline\sigma:[0,\infty)\to\bR$ is deterministic and satisfies
\[
\int_0^\infty\overline\sigma_s^2\,\ud s<\infty,
\qquad
v_t:=\int_t^\infty\overline\sigma_s^2\,\ud s>0,
\quad t\geq0.
\]
Let $E\sim\mathrm{Exp}(1)$ be independent of $\sF_\infty$ and set
$\overline X:=Y+E$. With
\[
\mu_t
:=
\log m
-\frac12\int_0^\infty\overline\sigma_s^2\,\ud s
+\int_0^t\overline\sigma_s\,\ud W_s,
\]
the conditional distribution of $\log Y$ given $\sF_t$ is normal with mean $\mu_t$ and variance $v_t$. Hence
\[
\hat F(t,x)
=
\bE[g_x(Y)\mid\sF_t],
\qquad
g_x(y)
:=
\begin{cases}
1, & x\leq y,\\[1mm]
e^{-(x-y)}, & x>y.
\end{cases}
\]
This gives a finite-dimensional learning model in which the conditional law of the learnable component $Y$ is determined by $(\mu_t,v_t)$. Moreover, since $x\mapsto g_x(y)$ is globally $1$-Lipschitz uniformly in $y$, conditional expectation yields
\[
|\hat F(t,x)-\hat F(t,\bar x)|\leq |x-\bar x|,
\]
so Assumption~\ref{assumption} holds with $L_{\hat F}=1$.
\end{example}

\begin{example}\label{example:path-dependent-learning}
Fix $H\in(1/2,1)$ and let
\[
B_t^H
:=
\int_0^t K_H(t,s)\,\ud W_s,
\quad t\geq0,
\]
be a fractional Brownian motion represented through $W$, where $K_H$
denotes the corresponding Volterra kernel. Define
\[
I_\infty
:=
\int_0^\infty e^{-s}\tanh(B_s^H)\,\ud s,
\qquad
\Theta
:=
\frac12+\frac14 I_\infty. 
\]
Since $|I_\infty|\leq1$, we have $\Theta\in[1/4,3/4]$. Let $E\sim\mathrm{Exp}(1)$ be independent of $\sF_\infty$ and set
\[
\overline X:=\Theta+E.
\]
Then, with $g_x$ as in the preceding example,
\[
\hat F(t,x)
=
\bE[g_x(\Theta)\mid\sF_t].
\]
Thus, $\Theta$ represents a learnable component of reserves, while the
independent random variable $E$ represents residual uncertainty that
cannot be resolved through the information flow. Moreover, the learning mechanism is genuinely path dependent. Indeed, for
$\ell>t$,
\[
\bE[B_\ell^H\mid\sF_t]
=
\int_0^t K_H(\ell,s)\,\ud W_s,
\]
which depends on the observed path $W_{\cdot\wedge t}$ rather than only
on the current value $B_t^H$. Consequently, the conditional distribution
of $\Theta$, and hence the survival function $\hat F(t,x)$, may depend on
the entire history of the observed signal. The example combines persistent reserve uncertainty with genuinely non-Markovian learning. As above, $x\mapsto g_x(\theta)$ is uniformly $1$-Lipschitz, so Assumption~\ref{assumption} again holds with $L_{\hat F}=1$.
\end{example}

\subsection{The time-consistent control problem}

The time-inconsistent problem admits an equivalent time-consistent reformulation with reward functional
\begin{align*}
\tilde J(t,x;\tilde\alpha)
&=
\bE\left[
\int_t^\infty
\hat F(s,X_s^{t,x;\tilde\alpha})
e^{-rs}\tilde\alpha_s(1-\tilde\alpha_s)\,ds
\Big|\sF_t
\right],
\end{align*}
where the conditional expectation is taken under the benchmark probability measure $\bP$, and value function
\begin{align}
\tilde V(t,x)
&=
\esssup_{\tilde\alpha\in\cA}
\tilde J(t,x;\tilde\alpha).
\label{dynamic-value-functional}
\end{align}
Our discounting convention implies
\[
\lim_{t\to\infty}\tilde V(t,x)=0.
\]
Although nonstandard, discounting from calendar time zero rather than from the current time does not change the nature of the control problem and conveniently yields a boundary condition at temporal infinity. By Assumption~\ref{assumption}, $\tilde V$ is bounded and, since $\hat F(t,\cdot)$ is non-increasing, $\tilde V(t,\cdot)$ is a.s.~non-increasing as well.

The next proposition collects basic properties of the value function. Its proof is analogous to that of \cite[Proposition 3.3]{qiu_stochastic_hjb} and is omitted.

\begin{prop}\label{proposition}
\begin{enumerate}
\item[(i)] For every $\epsilon>0$ and $(t,x)\in\bR^+\times\bR^+$, there exists $\tilde\alpha\in\cA$ such that
\[
\bE\left[
\tilde V(t,x)-\tilde J(t,x;\tilde\alpha)
\right]
<\epsilon.
\]

\item[(ii)] With probability one, $\tilde V$ and, for every $\alpha\in\cA$, $\tilde J(\cdot,\cdot;\alpha)$ are continuous on $\bR^+\times\bR^+$.

\item[(iii)] For every $(\alpha,x)\in\cA\times\bR^+$,
\[
\{\tilde V(s,X_s^{0,x;\alpha})\}_{s\in\bR^+}
\]
is a continuous process.

\item[(iv)] There exists $L_V>0$ such that, for every $(\alpha,t)\in\cA\times\bR^+$,
\[
|\tilde V(t,x)-\tilde V(t,y)|
+
|\tilde J(t,x;\alpha)-\tilde J(t,y;\alpha)|
\leq
L_V|x-y|
\quad\text{a.s.},
\qquad
x,y\in\bR^+.
\]
\end{enumerate}
\end{prop}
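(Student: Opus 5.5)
We prove the four items in the order (iv), (i), (ii), (iii). The Lipschitz bound in (iv) supplies the spatial regularity needed in (ii) and (iii). Throughout we use that the running reward satisfies $0\le e^{-rs}\hat F(s,X_s)\tilde\alpha_s(1-\tilde\alpha_s)\le e^{-rs}/4$. Hence every functional $\tilde J$ is bounded by $e^{-rt}/(4r)$, and the tail of the time integral beyond any $T$ is at most $e^{-rT}/(4r)$.

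\textbf{Step (iv).} Fix $\alpha\in\cA$. Since $X_s^{t,x;\alpha}-X_s^{t,y;\alpha}=x-y$ for all $s\ge t$, Assumption~\ref{assumption} gives
\[
|\tilde J(t,x;\alpha)-\tilde J(t,y;\alpha)|\le \bE\Big[\int_t^\infty L_{\hat F}|x-y|e^{-rs}\tfrac14\,ds\,\Big|\,\sF_t\Big]\le \tfrac{L_{\hat F}}{4r}|x-y| .
\]
This holds a.s. because the Lipschitz bound is an essential-supremum bound in $\omega$. Taking the esssup over $\alpha$ on both sides transfers the bound to $\tilde V$, so $L_V=L_{\hat F}/(2r)$ works for the sum.

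\textbf{Step (i).} We show that $\{\tilde J(t,x;\alpha)\}_{\alpha\in\cA}$ is upward directed. Given $\alpha^1,\alpha^2\in\cA$, set $A=\{\tilde J(t,x;\alpha^1)\ge\tilde J(t,x;\alpha^2)\}\in\sF_t$ and define the pasted control $\alpha=\alpha^1 1_A+\alpha^2 1_{A^c}$ on $[t,\infty)$. This control is admissible, and $\tilde J(t,x;\alpha)=\max_i\tilde J(t,x;\alpha^i)$. Standard esssup theory then yields a sequence $\alpha^n$ with $\tilde J(t,x;\alpha^n)\uparrow\tilde V(t,x)$ a.s. Since all these quantities are bounded, monotone convergence gives $\bE[\tilde V-\tilde J(t,x;\alpha^n)]\to0$.

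\textbf{Step (ii).} For continuity of $\tilde J(\cdot,\cdot;\alpha)$, write
\[
\tilde J(t,x;\alpha)=\bE\Big[\int_0^\infty 1_{\{s\ge t\}}\hat F(s,x+\textstyle\int_t^s\alpha)\,e^{-rs}\alpha_s(1-\alpha_s)ds\,\Big|\,\sF_t\Big].
\]
In a Brownian filtration, the conditional expectation of a fixed integrable random variable, $t\mapsto\bE[\xi\mid\sF_t]$, has a continuous version. The integrand depends on $(t,x)$, and this dependence is Lipschitz, uniformly in $\omega$. So we discretize $(t,x)$ on a countable grid, take continuous versions there, and use the uniform modulus to extend the continuity to all $(t,x)$, truncating the horizon at $T$ with the tail bound above.

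Continuity of $\tilde V$ in $x$ is given by (iv). Continuity in $t$ is the main obstacle: the esssup is taken over a family of controls that depends on $t$. For it we use the dynamic programming principle
\[
\tilde V(t,x)=\esssup_\alpha\bE\Big[\int_t^{t+h}\hat F e^{-rs}\alpha(1-\alpha)ds+\tilde V(t+h,X_{t+h}^{t,x;\alpha})\,\Big|\,\sF_t\Big].
\]
The DPP is proved with the pasting from (i) together with (iv), which lets us handle a countable partition of the state. Combined with (iv) and $|X_{t+h}-x|\le h$, it gives
\[
|\tilde V(t,x)-\bE[\tilde V(t+h,x)\mid\sF_t]|\le Ch .
\]
Hence $\tilde V(t,x)$ differs by $O(h)$ from a martingale-type object $\bE[\tilde V(T,x)\mid\sF_t]$ up to a continuous correction. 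More concretely, we show that $\tilde V(\cdot,x)+\int_0^\cdot e^{-rs}/4\,ds$ is a supermartingale and $\tilde V(\cdot,x)$ is a submartingale, each with an RCLL modification. Any jump would then have to be compatible with both properties, which in a Brownian filtration (where all martingales are continuous) forces continuity. We then extend to all $(t,x)$ using the uniform Lipschitz bound in $x$. We expect this step to require the most care, in particular the treatment of null sets that depend on $x$.

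\textbf{Step (iii).} This follows from (ii) and (iv). The path $s\mapsto X_s^{0,x;\alpha}$ is continuous, and $\tilde V$ is a.s. jointly continuous, so the composition $s\mapsto\tilde V(s,X_s^{0,x;\alpha})$ is continuous on the same full-measure set.
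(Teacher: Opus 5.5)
The paper does not prove this proposition; it defers to \cite[Proposition 3.3]{qiu_stochastic_hjb}. Your outline therefore has to stand on its own. For (i), (iii), (iv) and the continuity of $\tilde J$, it does.

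There is, however, a concrete error in the key step for $\tilde V$ in (ii): the sub- and supermartingale roles are reversed. As stated, your two claims are incompatible. Together they would give $\int_t^{t+h}\frac14e^{-rs}\,ds\le\tilde V(t,x)-\bE[\tilde V(t+h,x)\mid\sF_t]\le0$, which is impossible since the integral is positive. In the deterministic benchmark, $\tilde V(t,x)=e^{-rt}\hat V(x)$ is strictly decreasing, hence not a submartingale. What is true, and follows directly without the DPP, is
\[
0\le\tilde V(t,x)-\bE\bigl[\tilde V(t+h,x)\mid\sF_t\bigr]\le\int_t^{t+h}\tfrac14e^{-rs}\,ds .
\]
For the lower bound, use the control that is $0$ on $[t,t+h)$, so the state stays at $x$. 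Follow it by a maximizing sequence for $\tilde V(t+h,x)$ from (i), and apply conditional monotone convergence. For the upper bound, use $X_s^{t,x;\alpha}\ge X_s^{t+h,x;\alpha}$ for $s\ge t+h$ together with the monotonicity of $\hat F(s,\cdot)$. Thus $\tilde V(\cdot,x)$ is a \emph{super}martingale and $\tilde V(\cdot,x)+\int_0^\cdot\frac14e^{-rs}\,ds$ is a \emph{sub}martingale. Your $Ch$-estimate gives the analogous correct statement for $\tilde V\mp Ct$.

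The sentence about jumps being ``compatible with both properties'' also needs to become an actual argument:
\begin{itemize}
\item[1.] Since $t\mapsto\bE[\tilde V(t,x)]$ is Lipschitz, $\tilde V(\cdot,x)$ has an RCLL modification.
\item[2.] Doob--Meyer on $[0,N]$ gives $\tilde V(\cdot,x)=M-A$. Here $A$ is predictable and increasing, and $M$ is a martingale, hence continuous in the Brownian filtration.
\item[3.] Then $B:=\int_0^\cdot\frac14e^{-rs}\,ds-A$ is a submartingale of finite variation. In its Doob--Meyer decomposition $B=N+D$, the martingale $N=B-D$ is continuous and of finite variation, hence zero.
\item[4.] So $B$ is increasing, i.e.\ $0\le \ud A\le\frac14e^{-rt}\,\ud t$, and $\tilde V(\cdot,x)$ is continuous.
\end{itemize}
With this correction, the extension from a countable dense set of $x$ via the Lipschitz bound goes through as you describe: first for rational $t$, then for all $t$ by time continuity. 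Item (iii) then follows as you say.
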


By construction,
\begin{equation}\label{scale_feature}
\tilde V(t,x)
=
\hat F(t,x)V(t,x).
\end{equation}
Hence it is sufficient to analyze the time-consistent problem. To this end, define the random Hamiltonian
\[
\cH(t,x,p)
:=
\max_{a\in[0,1]}
\left\{
ap+e^{-rt}a(1-a)\hat F(t,x)
\right\}.
\]
We show below that $\tilde V$ is an adapted random field and the unique viscosity solution, in the sense of Definition~\ref{def_vis_sol}, of the stochastic Hamilton--Jacobi equation
\begin{equation}\label{SHJB_lim0}
-\mathfrak d_t\tilde V(t,x)
=
\cH(t,x,\partial_x\tilde V(t,x)),
\qquad
\lim_{t\to\infty}
\sup_{x\in\bR^+}
|\tilde V(t,x)|
=
0
\quad\text{a.s.},
\end{equation}
for $(t,x)\in\bR^+\times\bR^+$. If the viscosity solution is sufficiently regular, the optimal time-consistent strategy is
\begin{equation*}
\tilde\alpha^*(t,x)
=
\frac{
\left(
\partial_x\tilde V(t,x)
+
e^{-rt}\hat F(t,x)
\right)^+
}{
2e^{-rt}\hat F(t,x)
}.
\end{equation*}

\begin{rmk}
In the notation used for stochastic viscosity solutions, an adapted random field $\tilde V$ admits the semimartingale decomposition
\[
d\tilde V(t,x)
=
\mathfrak d_t\tilde V(t,x)\,dt
+
\mathfrak d_\omega\tilde V(t,x)\,dW_t;
\]
see Section~\ref{section_sto_con} for the definitions of $\mathfrak d_t\tilde V$ and $\mathfrak d_\omega\tilde V$. Comparing this decomposition with the stochastic Hamilton--Jacobi equation gives
\[
-\mathfrak d_t\tilde V(t,x)
=
\cH\bigl(t,x,\partial_x\tilde V(t,x)\bigr).
\]
The martingale component $\mathfrak d_\omega\tilde V(t,x)$ is determined by martingale representation. In this sense, the stochastic HJ equation can be written compactly in terms of the drift operator $\mathfrak d_t$ as in \eqref{SHJB_lim0}.
In view of \eqref{scale_feature}, the scaled time-inconsistent value function satisfies
\begin{equation*}
-\mathfrak d_t[\hat F(t,x)V(t,x)]
=
\frac{
\left|
\left(
\partial_x[\hat F(t,x)V(t,x)]
+
e^{-rt}\hat F(t,x)
\right)^+
\right|^2
}{
4e^{-rt}\hat F(t,x)
},
\qquad
\lim_{t\to\infty}
\hat F(t,x)V(t,x)
=
0.
\end{equation*}
Assuming sufficient regularity, the optimal strategies of the time-inconsistent problem and its time-consistent counterpart coincide:
\begin{equation*}
\alpha^*(t,x)
=
\frac{
\left(
\partial_x[\hat F(t,x)V(t,x)]
+
e^{-rt}\hat F(t,x)
\right)^+
}{
2e^{-rt}\hat F(t,x)
}
=
\tilde\alpha^*(t,x).
\end{equation*}
\end{rmk}

\section{Viscosity solutions for the time-consistent problem}\label{section_sto_con}
We characterize the value function as the unique viscosity solution of the stochastic HJ equation. In the stochastic setting, test functions are semimartingale random fields and therefore carry a martingale component in addition to their spatial derivatives. This motivates the following class of test processes.

\begin{defn}\label{cf1}
For $\phi\in\mathcal{S}^2(C_0(\bR^+)\oplus \bR)$ with $\partial_x\phi\in\mathcal{L}^2(C_0(\bR^+))$, 
we say that $\phi\in\mathcal{C}_{\sF}$ if there exists
\[
(\mathfrak{d}_t\phi,\mathfrak{d}_{\omega}\phi)
\in\mathcal{L}^2(C_0(\bR^+)\oplus \bR)
\times\mathcal L^{2}(C_0(\bR^+)\oplus \bR)
\]
such that: 
\begin{enumerate}
    \item [(i)]
For all $0\le \tilde r\le \tau< \infty$ and $x\in\bR^+$,
\begin{equation*}
\phi(\tau,x)=\phi(\tilde r,x)+\int_{\tilde r}^{\tau} \! \mathfrak{d}_s \phi(s, x)\,\ud s +\int_{\tilde r}^{\tau} \! \mathfrak{d}_{\omega} \phi(s, x)\,\ud W_s, a.s.;
\end{equation*}
\item [(ii)] For each $N,K>0$, there exist $p>1$, $q\in[1,\infty)$, $\beta\in \big(\frac{1}{p},1\big)$, and an $(\sF_t)_{t\ge0}$-adapted process $L_{\phi}$ such that, a.s., for almost every $t\in(0,\infty)$ 
and all $x, \overline x\in[0,K]$,
 \begin{align*}
 | \mathfrak{d}_t \phi(t, x)-\mathfrak{d}_t \phi(t, \overline x) | + | \partial_x \phi(t, x)-\partial_x \phi(t, \overline x) | 
 \le L_{\phi}(\omega,t) (1+ |x|^q + |\overline x|^q)| x-\overline x |^{\beta},
 \end{align*}
 where $L_{\phi}\in L^p(\Omega\times[0,N);\bR^+)$.
\end{enumerate}
\end{defn}

 \begin{rmk}
Each $\phi\in\mathcal C_{\sF}$ is an It\^o semimartingale parametrized by $x\in\bR^+$. Uniqueness of the semimartingale decomposition yields uniqueness of $(\mathfrak d_t\phi,\mathfrak d_\omega\phi)$ in $\mathcal L^2(C_0(\bR^+)\oplus\bR)\times\mathcal L^2(C_0(\bR^+)\oplus\bR)$, so the operators $\mathfrak d_t$ and $\mathfrak d_\omega$ are well defined; cf.~\cite[Section 5.2]{cont2013functional} and \cite[Theorem 4.3]{leao2018weak}. For deterministic $\phi$, $\mathfrak d_\omega\phi=0$ and $\mathfrak d_t\phi=\partial_t\phi$. Under suitable Malliavin regularity, $\mathfrak d_\omega\phi(t,x)$ can be identified with the diagonal Malliavin derivative of $\phi(t,x)$.      
 \end{rmk}

We will use the following It\^o--Kunita formula; its proof is analogous to \cite[Pages 118--119]{kunita1981some}.

\begin{lem}[It\^o--Kunita formula]\label{Ito-Kunita}
    Suppose $u\in\mathcal C_{\sF}$. Then, for each $\alpha\in\cA$, almost surely, for every $0\le\rho\le\tau<\infty$ and $x\in\bR^+$, 
    \begin{equation*}
        u(\tau,X_{\tau}^{\rho,x;\alpha})-u(\rho,x) = \int_{\rho}^{\tau}\! \left[ \mathfrak{d}_s u(s,X_s^{\rho,x;\alpha})+\alpha_s \partial_x u(s,X_s^{\rho,x;\alpha}) \right]\,\ud s + \int_{\rho}^{\tau}\!  \mathfrak{d}_{\omega} u(s,X_{s}^{\rho,x;\alpha})\,\ud W_s.
    \end{equation*}
\end{lem}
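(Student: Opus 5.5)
The plan follows Kunita's classical argument for Itô--Wentzell-type formulas. Because the state process $X^{\rho,x;\alpha}$ has finite variation, the argument involves no second-order or cross-variation terms.

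Fix $\alpha\in\cA$ and $x$, and write $X_s=X_s^{\rho,x;\alpha}$. We first treat deterministic $\rho\le\tau$, and afterwards extend to all pairs simultaneously by continuity. Take a partition $\rho=s_0<s_1<\dots<s_n=\tau$ with mesh tending to zero and telescope:
\[
u(\tau,X_\tau)-u(\rho,x)=\sum_{i}\bigl[u(s_{i+1},X_{s_{i+1}})-u(s_{i+1},X_{s_i})\bigr]+\sum_i\bigl[u(s_{i+1},X_{s_i})-u(s_i,X_{s_i})\bigr].
\]

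The first sum is the spatial increment. By the mean value theorem it equals $\sum_i\int_{s_i}^{s_{i+1}}\alpha_s\,\partial_x u(s_{i+1},\xi_i)\,ds$ for some $\xi_i$ between $X_{s_i}$ and $X_{s_{i+1}}$. Here $X$ takes values in the compact set $[x,x+(\tau-\rho)]$, and $\partial_x u(t,\cdot)$ is continuous. Using the Hölder bound in Definition~\ref{cf1}(ii) together with the continuity in time of $\partial_x u$ (which follows from the representation of $u$ and its derivative), this sum converges to $\int_\rho^\tau\alpha_s\partial_x u(s,X_s)\,ds$ in probability.

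The second sum is the temporal increment. By Definition~\ref{cf1}(i) applied at the frozen point $y=X_{s_i}$, each term equals
\[
\int_{s_i}^{s_{i+1}}\mathfrak d_s u(s,y)\,ds+\int_{s_i}^{s_{i+1}}\mathfrak d_\omega u(s,y)\,dW_s\Big|_{y=X_{s_i}}.
\]
Since $X_{s_i}$ is $\sF_{s_i}$-measurable, substituting it into the stochastic integral over $[s_i,s_{i+1}]$ is legitimate. To justify this, first consider simple random variables $y$, where the substitution is obvious, and then approximate, using the continuity of $y\mapsto\int\mathfrak d_\omega u(s,y)\,dW_s$ in $L^2$ that comes from $\mathfrak d_\omega u$ taking values in $C_0\oplus\bR$. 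The sum then becomes
\[
\int_\rho^\tau\mathfrak d_s u(s,X^n_s)\,ds+\int_\rho^\tau\mathfrak d_\omega u(s,X^n_s)\,dW_s,
\]
where $X^n_s=X_{s_i}$ for $s\in[s_i,s_{i+1})$.

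For the $ds$ integral, the Hölder estimate (ii) for $\mathfrak d_t u$ on $[0,K]$ with $L_u\in L^p$ gives convergence via $|X^n_s-X_s|\le\text{mesh}$. For the stochastic integral, we need $\int_\rho^\tau|\mathfrak d_\omega u(s,X^n_s)-\mathfrak d_\omega u(s,X_s)|^2ds\to0$ in probability. This follows from the pointwise continuity of $\mathfrak d_\omega u(s,\cdot)$, since it is $C_0\oplus\bR$-valued, combined with dominated convergence using the bound $\sup_x|\mathfrak d_\omega u(s,x)|$, which is square-integrable in $(\omega,s)$ by membership in $\mathcal L^2(C_0\oplus\bR)$.

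The main obstacle is this last step, where only pointwise spatial continuity of $\mathfrak d_\omega u$ is available and no modulus is given. Dominated convergence handles it as described, but one must check that it does not require joint measurability beyond what is available. I would address this by working with the progressively measurable $C_0$-valued version of $\mathfrak d_\omega u$.

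Finally, to pass from fixed deterministic $(\rho,\tau)$ to an almost sure statement valid for all $\rho\le\tau$ and all $x$, I would first establish the identity for rational times and rational $x$. Both sides are continuous in $(\rho,\tau,x)$: the left side by path continuity of $u$ and the Lipschitz dependence of $X$ on $x$, and the right side by choosing continuous versions of the stochastic integrals, via Kolmogorov's criterion using the $L^2$ continuity in $x$ established above. Density then yields the formula on a single full-measure set.
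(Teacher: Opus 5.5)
Your architecture (telescoping over a partition, freezing the spatial argument at the adapted point $X_{s_i}$ in the temporal increments, then passing to the limit) is Kunita's partition argument, which is all the paper appeals to. Your treatment of the temporal increments is fine. In particular, the stochastic-integral term you single out as the main obstacle is harmless by dominated convergence.

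The genuine gap is in the spatial increments. Writing them as $\sum_i\int_{s_i}^{s_{i+1}}\alpha_s\,\partial_x u(s_{i+1},\xi_i)\,ds$ samples $\partial_x u$ at the fixed partition times $s_{i+1}$. To compare these samples with $\partial_x u(s,X_s)$ you invoke continuity of $\partial_x u$ in time, and that property is not part of Definition~\ref{cf1}:
\begin{itemize}
\item $\partial_x u$ is only required to lie in $\mathcal L^2(C_0(\bR^+))$, so it is controlled only for a.e.\ $t$ and only in $L^2$ in time.
\item The H\"older bound (ii) likewise holds only for a.e.\ $t$, with a merely $L^p$ coefficient $L_u$.
\end{itemize}
Nor can time continuity be read off from the representation (i) as you suggest. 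Condition (i) cannot be differentiated in $x$, because $\mathfrak d_t u$ is only H\"older and $\mathfrak d_\omega u$ only continuous in $x$. In Kunita's setting the spatial derivatives are assumed jointly continuous in $(t,x)$, and that assumption is exactly what makes these Riemann-type sums converge.

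A repair that keeps your structure is to regularize in $x$ first. Take $\zeta_\varepsilon\ge0$ smooth with support in $(0,\varepsilon)$ and $\int\zeta_\varepsilon=1$, and set $u^\varepsilon(t,x):=\int_0^\varepsilon u(t,x+z)\zeta_\varepsilon(z)\,dz$. This mollifier is one-sided, so only values on $\bR^+$ enter.
\begin{itemize}
\item By (stochastic) Fubini, $u^\varepsilon$ satisfies (i), with $\mathfrak d_t u^\varepsilon$ and $\mathfrak d_\omega u^\varepsilon$ the corresponding averages of $\mathfrak d_t u$ and $\mathfrak d_\omega u$.
\item The derivative $\partial_x u^\varepsilon(t,x)=-\int u(t,w)\zeta'_\varepsilon(w-x)\,dw$ is jointly continuous because $u$ is.
\item For $u^\varepsilon$ your argument goes through verbatim. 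The spatial increment equals $\int_{s_i}^{s_{i+1}}\alpha_s\partial_x u^\varepsilon(s_{i+1},X_s)\,ds$ exactly, and uniform continuity on $[\rho,\tau]\times[x,x+\tau-\rho]$ gives the limit.
\item Letting $\varepsilon\to0$ only uses that $\partial_x u^\varepsilon(s,\cdot)\to\partial_x u(s,\cdot)$ locally uniformly for a.e.\ $s$, dominated by $\sup_y|\partial_x u(s,y)|$. The analogous dominated convergence handles $\mathfrak d_t u$ (pathwise) and $\mathfrak d_\omega u$ (in $L^2(\Omega\times[\rho,\tau])$). Condition (ii) is not needed at all.
\end{itemize}

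Separately, your final step is not justified as written: $L^2$-continuity in $x$ without a rate does not feed Kolmogorov's criterion. Instead, define the stochastic-integral field as the left-hand side minus the $ds$-integral. Both are pathwise continuous in $(\rho,\tau,x)$, and for each fixed parameter this difference coincides a.s.\ with the It\^o integral. That gives the required continuous version directly.
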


\subsection{Viscosity solutions}

For $t\ge0$, let $\mathcal T^t$ denote the stopping times with values in $[t,\infty)$ and $\mathcal T^t_+$ those satisfying $\tau>t$. These classes are used to formulate the optimal-stopping notion of tangency. For $(u,\tau)\in\mathcal S^2(C_0(\bR^+))\times\mathcal T^0$, $\Omega_\tau\in\sF_\tau$ with $\bP(\Omega_\tau)>0$, and $\xi\in L^0(\Omega,\sF_\tau;\bR^+)$, define:

\begin{align*}
\underline{\mathcal{G}}u(\tau, \xi; \Omega_{\tau}):=&\Bigg\{ \phi\in\mathcal{C}_{\sF}: (\phi-u)(\tau, \xi)1_{\Omega_{\tau}} = 0
\\
&=\essinf_{\overline{\tau}\in \mathcal{T}^{\tau}} \bE_{\sF_{\tau}} \left[ \inf_{y\in\bR^+}(\phi-u)(\overline{\tau}\land \hat{\tau},y) \right]1_{\Omega_{\tau}} \text{ }a.s.\text{ for some } \hat\tau\in \mathcal T_+^{\tau}\Bigg\},
\end{align*}

\begin{align*}
\overline{\mathcal{G}}u(\tau, \xi; \Omega_{\tau}):=&\Bigg\{ \phi\in\mathcal{C}_{\sF}: (\phi-u)(\tau, \xi)1_{\Omega_{\tau}} = 0
\\
&=\esssup_{\overline{\tau}\in \mathcal{T}^{\tau}} \bE_{\sF_{\tau}} \left[ \sup_{y\in\bR^+}(\phi-u)(\overline{\tau}\land \hat{\tau},y) \right]1_{\Omega_{\tau}} \text{ }a.s.\text{ for some } \hat\tau\in \mathcal T_+^{\tau}\Bigg\}.
\end{align*}

The test functions are adapted semimartingale random fields, and tangency is imposed in conditional expectation rather than pathwise. If
\(\phi\in\underline{\mathcal G}u(\tau,\xi;\Omega_\tau)\), then \((\phi-u)(\tau,\xi)=0\) on \(\Omega_\tau\), while
\[
0=
\essinf_{\bar\tau\in\mathcal T^\tau}
\bE_{\sF_\tau}
\left[
\inf_y(\phi-u)(\bar\tau\wedge\hat\tau,y)
\right]
\quad\text{on }\Omega_\tau.
\]
is the stochastic analogue of a local minimum. The stopping time \(\hat\tau>\tau\) localizes the condition, and the essential infimum over future stopping times replaces pathwise minimization in time. Equivalently, with \(Z_s:=\inf_y(\phi-u)(s,y)\),
\[
\essinf_{\bar\tau\in\mathcal T^\tau}
\bE_{\sF_\tau}[Z_{\bar\tau\wedge\hat\tau}]
=Z_\tau=0.
\]
Thus no admissible stopping rule can produce a negative conditional expected gap locally after \(\tau\).

\begin{defn}\label{def_vis_sol}
    We say that $u\in\mathcal S^2(C_0(\bR^+))$ is a viscosity subsolution (resp. supersolution) of the stochastic Hamilton--Jacobi equation \eqref{SHJB_lim0} if
    \[
    \lim_{T\to\infty}\sup_{y\in\bR^+}u(T,y)\le0
    \qquad
    \left(\text{resp., }\lim_{T\to\infty}\inf_{y\in\bR^+}u(T,y)\ge0\right)
    \quad\text{a.s.},
    \]
    and if, for any $\tau\in\mathcal T^0$, $\Omega_{\tau}\in\sF_{\tau}$ with $\bP(\Omega_{\tau})>0$, $\xi\in L^0(\Omega,\sF_{\tau};\bR^+)$, and any $\phi\in\underline{\mathcal G}u(\tau,\xi;\Omega_{\tau})$ (resp. $\phi\in\overline{\mathcal G}u(\tau,\xi;\Omega_{\tau})$), it holds that
    \begin{equation}\label{subsol}
    \operatorname{ess}\liminf_{(s,z)\to(\tau^{+},\xi)}
    \left\{-\mathfrak{d}_s\phi(s,z)-\mathcal H(s,z,\partial_z\phi(s,z))\right\}
    \le0,
    \quad\text{for a.e. }\omega\in\Omega_{\tau},
    \end{equation}
    \begin{equation}\label{supsol}
    \left(\text{resp., }\operatorname{ess}\limsup_{(s,z)\to(\tau^{+},\xi)}
    \left\{-\mathfrak{d}_s\phi(s,z)-\mathcal H(s,z,\partial_z\phi(s,z))\right\}
    \ge0,
    \quad\text{for a.e. }\omega\in\Omega_{\tau}\right).
    \end{equation}
    The function $u$ is a viscosity solution to the stochastic HJ equation \eqref{SHJB_lim0} if it is both a viscosity subsolution and a viscosity supersolution.
\end{defn}

\subsection{Existence of viscosity solutions}
The existence proof relies on the following dynamic programming principle, whose proof is analogous to \cite[Theorem 3.4]{qiu_stochastic_hjb}.

\begin{thm}[Dynamic programming principle]\label{DPP}
    Let Assumption~\ref{assumption} hold. For any stopping time $\tau,\hat{\tau}$, with $0\le\tau\le\hat{\tau}<\infty$, and any $\xi\in L^0(\Omega,\sF_{\tau};\bR^+)$, we have
    \begin{equation*}
        \tilde V(\tau,\xi)=\esssup_{\alpha\in\cA} \bE_{\sF_{\tau}}\left[ \int_{\tau}^{\hat{\tau}}\! e^{-rs}\alpha_s(1-\alpha_s)\hat F(s,X_s^{\tau,\xi;\alpha}) ds + \tilde V(\hat{\tau},X_{\hat{\tau}}^{\tau,\xi;\alpha}) \right].
    \end{equation*}
\end{thm}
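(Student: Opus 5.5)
We prove the two inequalities separately, writing $I(\alpha)$ for the conditional expectation on the right-hand side and $\tilde J(\tau,\xi;\alpha)$ for the reward functional started at the random initial condition $(\tau,\xi)$. As a preliminary step, we extend $\tilde V(\tau,\xi)$ and $\tilde J(\tau,\xi;\alpha)$ from deterministic $(t,x)$ to stopping times and $\sF_\tau$-measurable $\xi$. We first do this for simple pairs $(\tau,\xi)$ taking finitely many values. There, on each set $\{\tau=t_i,\xi=x_j\}\in\sF_{t_i}$, $\tilde V(\tau,\xi)=\tilde V(t_i,x_j)$ by locality of conditional expectations. We then pass to the limit along $\tau_n\downarrow\tau$, $\xi_n\to\xi$. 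This uses the a.s.\ continuity of $\tilde V$ and $\tilde J$ from Proposition~\ref{proposition}(ii), the uniform Lipschitz bound of Proposition~\ref{proposition}(iv), and boundedness, since $0\le\hat F\le1$ and the integrand is at most $e^{-rs}/4$. We also use the flow property $X_s^{\tau,\xi;\alpha}=X_s^{\hat\tau,X_{\hat\tau}^{\tau,\xi;\alpha};\alpha}$ for $s\ge\hat\tau$, which is immediate from \eqref{stochastic-state-process}.

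\emph{Inequality ``$\le$''.} Fix $\alpha\in\cA$ and split $\tilde J(\tau,\xi;\alpha)$ at $\hat\tau$. By the tower property and the flow property, the tail term equals $\bE_{\sF_\tau}[\tilde J(\hat\tau,X^{\tau,\xi;\alpha}_{\hat\tau};\alpha)]$, and this is at most $\bE_{\sF_\tau}[\tilde V(\hat\tau,X^{\tau,\xi;\alpha}_{\hat\tau})]$ because $\tilde J\le\tilde V$ at random initial data. Hence $\tilde J(\tau,\xi;\alpha)\le I(\alpha)$; taking the esssup over $\alpha$ gives the inequality.

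\emph{Inequality ``$\ge$''.} This is the main obstacle: we must paste $\epsilon$-optimal controls measurably at the random point $(\hat\tau,X_{\hat\tau})$. Fix $\alpha$ and $\epsilon>0$. The first step is to show that $\{\tilde J(t,x;\beta):\beta\in\cA\}$ is upward directed. Given two controls, we switch to the better one on the $\sF_t$-measurable set where it wins; the pasted control is still admissible, and its value is the pointwise maximum. Hence there is a sequence $\beta^k$ with $\tilde J(t,x;\beta^k)\uparrow\tilde V(t,x)$, and by monotone convergence $\bE[\tilde V-\tilde J(\cdot;\beta^k)]\to0$, which is the content of Proposition~\ref{proposition}(i). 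Next we approximate $\hat\tau$ from above by stopping times $\hat\tau_n$ taking values in a finite grid, plus a tail beyond some large $T$ where everything is $O(e^{-rT})$. We also partition the state space into intervals of length $\delta$ with grid points $x_j$. On each cell $A_{ij}=\{\hat\tau_n=t_i,\ X_{t_i}^{\tau,\xi;\alpha}\in[x_j,x_j+\delta)\}\in\sF_{t_i}$ we choose $\beta^{ij}$ that is $\epsilon$-optimal in $L^1$ at $(t_i,x_j)$. We then define
\[
\alpha^\epsilon_s=\alpha_s1_{\{s<\hat\tau_n\}}+\sum_{i,j}\beta^{ij}_s1_{A_{ij}}1_{\{s\ge\hat\tau_n\}},
\]
which is progressively measurable. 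By Proposition~\ref{proposition}(iv), replacing $X_{t_i}$ by $x_j$ costs at most $2L_V\delta$ in both $\tilde V$ and $\tilde J$. This yields
\[
\bE\Big[\big(\tilde V(\tau,\xi)-I_n(\alpha)\big)^-\Big]\le C(\epsilon+\delta+e^{-rT}),
\]
where $I_n$ is the right-hand side with $\hat\tau$ replaced by $\hat\tau_n$.

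Finally we let $n\to\infty$, using the continuity of $\tilde V(s,X_s^{0,x;\alpha})$ in $s$ from Proposition~\ref{proposition}(iii), boundedness, and dominated convergence, so that $I_n(\alpha)\to I(\alpha)$. Then we send $\epsilon,\delta\to0$ and $T\to\infty$. This gives $\tilde V(\tau,\xi)\ge I(\alpha)$ a.s., and taking the esssup over $\alpha$ completes the proof.
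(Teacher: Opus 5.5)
Your proposal is correct and follows essentially the same standard argument the paper relies on; the paper omits its own proof and refers to \cite[Theorem 3.4]{qiu_stochastic_hjb}. The ingredients match: for ``$\le$'', the tower property together with $\tilde J\le\tilde V$ at random initial data; for ``$\ge$'', discretizing $\hat\tau$ and the state, pasting near-optimal controls from the upward-directed family, using the uniform Lipschitz bound of Proposition~\ref{proposition}(iv), and passing to the limit by continuity and bounded convergence. One detail to tighten: since there are countably many cells $A_{ij}$, choose the $L^1$ tolerances summable (e.g.\ $\epsilon 2^{-i-j}$), or use a.s.\ $\epsilon$-optimal controls obtained by countable pasting, so that the constant in $C(\epsilon+\delta+e^{-rT})$ does not depend on the partition.
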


The DPP yields the following almost-sure near-optimality estimate, using stability of $\mathcal A$ under finite and countable $\sF_\tau$-measurable pasting. The proof is standard.

\begin{lem}\label{conditional_h_optimal} 
Let $\tau$ and $ \theta$ be stopping times with $0\leq \tau \leq \theta<\infty$ a.s., and let
$\xi\in L^0(\Omega,\sF_\tau;\bR^+)$. For each $\alpha\in\cA$, define
\[
Y_\alpha
=
\bE_{\sF_\tau}
\left[
\int_\tau^\theta
e^{-rs}\alpha_s(1-\alpha_s)
\hat F(s,X_s^{\tau,\xi;\alpha})\,\ud s
+
\tilde V(\theta,X_\theta^{\tau,\xi;\alpha})
\right].
\]
Then, for every $\eta>0$, there exists $\alpha^\eta\in\cA$ such that
\[
Y_{\alpha^\eta}
\geq
\esssup_{\alpha\in\cA}Y_\alpha-\eta,
\quad \text{a.s.}
\]
Consequently, by the dynamic programming principle,
\begin{equation*}
\tilde V(\tau,\xi)
\leq
\bE_{\sF_\tau}
\left[
\int_\tau^\theta
e^{-rs}\alpha^\eta_s(1-\alpha^\eta_s)
\hat F(s,X_s^{\tau,\xi;\alpha^\eta})\,\ud s
+
\tilde V(\theta,X_\theta^{\tau,\xi;\alpha^\eta})
\right]
+\eta,
\quad \text{a.s.}
\end{equation*}
\end{lem}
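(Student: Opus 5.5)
The plan is the standard lattice (upward-directed) property of the family $\{Y_\alpha\}_{\alpha\in\cA}$, followed by countable pasting of controls on $\sF_\tau$-measurable sets.

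\emph{Step 1: the family is directed upward.} Fix $\alpha^1,\alpha^2\in\cA$, set $A:=\{Y_{\alpha^1}\ge Y_{\alpha^2}\}\in\sF_\tau$, and define
\[
\alpha_s:=\alpha^1_s1_A+\alpha^2_s1_{A^c}\ \text{ for } s\geq\tau,
\qquad \alpha_s:=\alpha^1_s \ \text{ for } s<\tau .
\]
Because $A\in\sF_\tau$, this control is progressively measurable with values in $[0,1]$, so $\alpha\in\cA$. Since $\xi$ is $\sF_\tau$-measurable, the state satisfies $X^{\tau,\xi;\alpha}_s=X^{\tau,\xi;\alpha^1}_s1_A+X^{\tau,\xi;\alpha^2}_s1_{A^c}$ for $s\ge\tau$. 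The integrand and the terminal term $\tilde V(\theta,\cdot)$ split in the same way, and we can pull $1_A$ out of the conditional expectation. Together these give $Y_\alpha=Y_{\alpha^1}\vee Y_{\alpha^2}$ a.s.

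\emph{Step 2: an increasing sequence.} By the standard property of the essential supremum of an upward-directed family, there is a sequence $(\alpha^n)\subset\cA$ such that $Y_{\alpha^n}\uparrow \esssup_\alpha Y_\alpha=:Y^*$ a.s. This limit is finite, because all $Y_\alpha$ are bounded by $\int_0^\infty e^{-rs}ds/4+\sup|\tilde V|$, and $\tilde V$ is bounded.

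\emph{Step 3: pasting.} Define the disjoint $\sF_\tau$-measurable sets $B_n:=\{Y_{\alpha^n}\ge Y^*-\eta\}\setminus\bigcup_{k<n}B_k$. Their union has full measure. Set $\alpha^\eta:=\sum_n \alpha^n1_{B_n}$ on $[\tau,\infty)$, and $\alpha^\eta:=\alpha^1$ before $\tau$. The countable analogue of Step 1 gives $Y_{\alpha^\eta}=\sum_n 1_{B_n}Y_{\alpha^n}\ge Y^*-\eta$ a.s. To justify interchanging the countable sum with the conditional expectation, we use the boundedness of the integrands together with the fact that $1_{B_n}$ is $\sF_\tau$-measurable.

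\emph{Step 4: the consequence.} By Theorem~\ref{DPP}, applied with $\hat\tau=\theta$, we have $\tilde V(\tau,\xi)=Y^*$. The first part then yields the displayed inequality immediately.

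The main technical point is the identification of the pasted state process and of the terms $\hat F(s,X_s)$ and $\tilde V(\theta,X_\theta)$ on each $B_n$. This relies on the locality of $X^{\tau,\xi;\alpha}$ in $\alpha$ on $\sF_\tau$-sets, and on the joint measurability and continuity of $\tilde V$ from Proposition~\ref{proposition}(ii). That continuity makes $\tilde V(\theta,X_\theta)$ well defined and lets it localize on sets.
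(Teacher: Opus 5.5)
Your proposal is correct and is essentially the argument the paper has in mind: the paper omits the proof as standard, citing exactly the stability of $\cA$ under finite and countable $\sF_\tau$-measurable pasting. You use this in the same way — finite pasting for upward-directedness, a monotone sequence $Y_{\alpha^n}\uparrow\esssup_{\alpha\in\cA}Y_\alpha$, countable pasting on the sets $B_n$, and Theorem~\ref{DPP} with $\hat\tau=\theta$ to identify the essential supremum with $\tilde V(\tau,\xi)$. (A minor point: $1_{B_n}Y_{\alpha^\eta}=1_{B_n}Y_{\alpha^n}$ holds for each $n$ separately by pulling out the $\sF_\tau$-measurable indicator, so you do not need to interchange a countable sum with the conditional expectation.)
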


We can now prove the viscosity property; notably, no boundedness estimate for the nonlinear Hamiltonian $\mathcal H$ is needed.

\begin{thm}\label{vis_existence}
    Under Assumption~\ref{assumption}, the value function \eqref{dynamic-value-functional} is a viscosity solution to the stochastic Hamilton--Jacobi equation \eqref{SHJB_lim0}.
    \begin{proof}
        Since $\lim_{t\to\infty}\tilde V(t,x)=0$ for all $x\in\bR^+$ a.s.~and $\tilde V\in\mathcal S^p(C_0(\bR^+))$ for every $p>1$, it remains to verify \eqref{subsol} and \eqref{supsol}.

\medskip
        \textbf{Step 1. Supersolution property.} Suppose, to the contrary, that \eqref{supsol} fails. Then there exists
        \[\phi\in\overline{\mathcal G}\tilde V(\tau,\xi;\Omega_\tau)\] 
        with $\tau\in\mathcal T^0,\Omega_{\tau}\in\sF_{\tau},\bP(\Omega_{\tau})>0$, $\xi\in L^0(\Omega,\sF_{\tau};\bR^+)$ and associated stopping time $\hat \tau$ satisfying $\hat \tau>\tau$ on $\Omega_\tau$, 
        such that there exist $\epsilon,\delta\in (0,1)$, 
        and $\Omega'\in\sF_{\tau}$ with $\Omega'\subset\Omega_{\tau}$, $\Omega'\subset\{\tau<\hat\tau\}$, and $\bP(\Omega')>0$ such that, a.s.~on $\Omega'$,
        \begin{equation*}
            \esssup_{s\in(\tau,(\tau+\delta^2)\land\hat\tau],z\in\bR^+,|z-\xi|<\delta} \left\{ -\mathfrak{d}_s\phi(s,z)-\mathcal H(s,z,\partial_z\phi(s,z)) \right\} \le - 2\epsilon.
        \end{equation*}
        
        Without loss of generality, let $\Omega_{\tau}=\Omega'=\Omega$, $\bP(\hat \tau>\tau + \delta^2)=1$, $\bP(\tau<1)=1$, and $\xi \in L^{\infty}(\Omega,\sF_{\tau};\bR^+)$. 
        By the measurable selection theorem, there exists $\bar \alpha\in\cA$ such that, a.s.,
        \begin{equation*}
            -\mathfrak{d}_s\phi(s,\xi)- \bar\alpha_s\partial_x\phi(s,\xi) -e^{-rs}\bar\alpha_s(1-\bar\alpha_s)\hat F(s,\xi) \le -\epsilon,
        \end{equation*}
        for $\bP\otimes ds$-a.e.~$(\omega,s)$ with $\tau<s\le\tau+\delta^2$.  Let us now choose $h\in (0,\delta^2)$. 
        By the definition of $\overline{\mathcal G}\tilde V(\tau,\xi;\Omega_{\tau})$, Lemma~\ref{Ito-Kunita} and Theorem~\ref{DPP}, we have that
        \begin{align*}
            0 = &\frac{\tilde V(\tau,\xi)-\phi(\tau,\xi)}{h} 
            \\
            = &\frac{1}{h} \left( \esssup_{\alpha\in\cA}\bE_{\sF_{\tau}}\left[ \int_{\tau}^{\tau+h }\! e^{-rs}\alpha_s(1-\alpha_s)\hat F(s,X_s^{\tau,\xi;\alpha}) ds 
            + \tilde V(\tau+h,X_{\tau+h }^{\tau,\xi;\alpha}) \right] - \phi(\tau,\xi) \right)
            \\
            = &\frac{1}{h} \esssup_{\alpha\in\cA}\bE_{\sF_{\tau}}\left[ \int_{\tau}^{\tau+h }\! e^{-rs}\alpha_s(1-\alpha_s)\hat F(s,X_s^{\tau,\xi;\alpha}) ds + \tilde V((\tau+h)\land\hat \tau,X_{(\tau+h)\land\hat \tau}^{\tau,\xi;\alpha}) - \phi(\tau,\xi) \right]
            \\
            \ge &\frac{1}{h} \esssup_{\alpha\in\cA}\bE_{\sF_{\tau}}\left[ \int_{\tau}^{\tau+h}\! e^{-rs}\alpha_s(1-\alpha_s)\hat F(s,X_s^{\tau,\xi;\alpha}) ds +\phi(\tau+h,X_{(\tau+h)\land\hat \tau}^{\tau,\xi;\alpha}) - \phi(\tau,\xi) \right]
            \\
            = &\frac{1}{h} \esssup_{\alpha\in\cA}\bE_{\sF_{\tau}}\Bigg[ \int_{\tau}^{\tau+h}\! \left[e^{-rs}\alpha_s(1-\alpha_s)\hat F + \mathfrak{d}_s\phi+\partial_x\phi\alpha_s \right](s,\xi) 
            \\
            &- \big[e^{-rs}\alpha_s(1-\alpha_s)(\hat F(s,\xi) - \hat F(s,X_s^{\tau,\xi;\alpha})) + \mathfrak{d}_s\phi(s,\xi) - \mathfrak{d}_s\phi(s,X_s^{\tau,\xi;\alpha})+\alpha_s(\partial_x\phi(s,\xi) - \partial_x\phi(s,X_s^{\tau,\xi;\alpha}) ) \big]   ds \Bigg]
            \\
            \ge 
            &\frac{1}{h} \bE_{\sF_{\tau}} \left[ \int_{\tau}^{\tau+h}\!\left( \mathfrak{d}_s\phi + \bar\alpha_s\partial_x\phi  +e^{-rs}\bar\alpha_s(1-\bar\alpha_s)\hat F  \right)(s,\xi)ds \right]
            \\
            &- \frac{1}{h}  \bE_{\sF_{\tau}} \left[ \int_{\tau}^{\tau+h} L_{\hat F} h 
            + |\mathfrak{d}_s\phi(s,X_s^{\tau,\xi;\bar\alpha})-\mathfrak{d}_s\phi(s,\xi)| + |\partial_x\phi(s,\xi)-\partial_x\phi(s,X_s^{\tau,\xi;\bar\alpha})|ds\right]
            \\
            \ge &\epsilon - L_{\hat F}h - \frac{1}{h} 
            \bE_{\sF_{\tau}} \left[ \int_{\tau}^{\tau+h } 
            \Big|\mathfrak{d}_s\phi(s,\xi)-\mathfrak{d}_s\phi\Big(s,\xi+\int_{\tau}^{s}\bar \alpha_t dt\Big)\Big| 
                + \Big|\partial_x\phi(s,\xi)-\partial_x\phi\Big(s,\xi+\int_{\tau}^{s}\bar\alpha_t dt\Big)\Big| ds \right]
            \\
            \geq 
            &\epsilon - L_{\hat F}h - \frac{1}{h} 
            \bE_{\sF_{\tau}} \left[ 
                \int_{\tau}^{\tau+h } 
                L_{\phi} (s) (1+|\xi|^q+|\xi + h|^q) |h|^{\beta} \, ds \right]
            \\
            \geq &
            \epsilon - L_{\hat F}h -  3(|1+\xi|^q) h^{\beta -\frac{1}{p}}  
            \bE_{\sF_{\tau}} \left[ 
                 \int_{0}^{1+\delta^2 } 
                |L_{\phi} (s)|^p   ds
               \right]^{1/p}
            \\
            \to &\epsilon, \text{ as }h\to 0^+,
        \end{align*}
        where the constants $p>1,q\geq 1,\beta>1/p$, and the $p$-integrable function $L_{\phi}$ depend on $\phi\in \mathcal{C}_{\sF}$ through Definition~\ref{cf1}.   As $\epsilon>0$, this leads to a contradiction.

        \medskip
        \textbf{Step 2. Subsolution property.} Suppose instead that \eqref{subsol} fails. Then there exists
        \[\phi\in\underline{\mathcal G}\tilde V(\tau,\xi;\Omega_\tau)\]
         with $\tau\in\mathcal T^0,\Omega_{\tau}\in\sF_{\tau},\bP(\Omega_{\tau})>0$ and $\xi\in L^0(\Omega,\sF_{\tau};\bR^+)$ 
         such that there exist $\epsilon,\delta\in(0,1)$ and $\Omega'\in\sF_{\tau}$ with $\Omega'\subset\Omega_{\tau}$, $\Omega'\subset\{\tau<\hat\tau\}$, and $\bP(\Omega')>0$ such that, a.s.~on $\Omega'$, 
        \begin{equation}\label{sub_opposite}
            \essinf_{s\in(\tau,(\tau+\delta^2)\land\hat \tau],z\in\bR^+,|z-\xi|<\delta} \left\{ -\mathfrak{d}_s\phi(s,z)-\mathcal H(s,z,\partial_z\phi(s,z)) \right\} \ge \epsilon.
        \end{equation}
        Without loss of generality,
         let $\Omega_{\tau}=\Omega'=\Omega$, $\bP(\tau<1,\hat \tau>\tau +\delta^2)=1$, and $\xi \in L^{\infty}(\Omega,\sF_{\tau};\bR^+)$. 
         Set $\tau^h=\tau+h$. 
         For each $h\in (0,\delta^2)$, we put $\eta=h^2$ and apply Lemma~\ref{conditional_h_optimal} to obtain a near-optimal control $\alpha^h\in\cA$ such that 
         \begin{align*}
         \tilde V(\tau,\xi) \leq \bE_{\sF_{\tau}} \left[ \int_{\tau}^{\tau^h}\! e^{-rs}\alpha_s^h(1-\alpha_s^h)\hat F(s,X_s^{\tau,\xi;\alpha^h}) ds + \tilde V(\tau^h,X_{\tau^h}^{\tau,\xi;\alpha^h}) \right] + h^2.
         \end{align*}
         As $|X_s^{\tau,\xi;\alpha^h}-\xi| \leq |h|\leq \delta$,   we have by \eqref{sub_opposite} that  
        \begin{equation*} 
            -\mathfrak{d}_s\phi(s,X_s^{\tau,\xi;\alpha^h})-\alpha_s^h\partial_x\phi(s,X_s^{\tau,\xi;\alpha^h}) -e^{-rs}\alpha_s^h(1-\alpha_s^h)\hat F(s,X_s^{\tau,\xi;\alpha^h}) \ge \epsilon,
        \end{equation*}
        for $\bP\otimes ds$-a.e.~$(\omega,s)$ with $\tau<s\le\tau+\delta^2$.
         Thus, the dynamic programming principle of Theorem~\ref{DPP}, the It\^o--Kunita formula of Lemma~\ref{Ito-Kunita}, and Lemma~\ref{conditional_h_optimal},
         yield the following contradiction:  
        \begin{align*}
            0 \le &\liminf_{h\to 0^+} \frac{1}{h} \bE_{\sF_{\tau}} \left[ (\phi-\tilde V)\left( \tau^h,X_{\tau^h}^{\tau,\xi;\alpha^h} \right) - (\phi-\tilde V)(\tau,\xi) \right]
            \\
            \le &\liminf_{h\to 0^+}\frac{1}{h} \bE_{\sF_{\tau}}\left[ \phi(\tau^h,X_{\tau^h}^{\tau,\xi;\alpha^h}) - \phi(\tau,\xi) + \int_{\tau}^{\tau^h}\! e^{-rs}\alpha^h_s(1-\alpha^h_s)\hat F(s,X_s^{\tau,\xi;\alpha^h})\,\ud s + h^2 \right]
            \\
            = 
            &\liminf_{h\to 0^+}\frac{1}{h} \bE_{\sF_{\tau}} \left[\int_{\tau}^{\tau^h}\! \mathfrak{d}_s\phi(s,X_s^{\tau,\xi;\alpha^h})+\alpha^h_s \partial_x\phi(s,X_s^{\tau,\xi;\alpha^h})+e^{-rs}\alpha^h_s(1-\alpha^h_s)\hat F(s,X_s^{\tau,\xi;\alpha^h}) ds + h^2 \right]
            \\
            \le &\liminf_{h\to 0^+}\frac{1}{h} \bE_{\sF_{\tau}} \Bigg[\int_{\tau}^{\tau^h}\! 
            (-\epsilon) ds +h^2 \Bigg]
            = -\epsilon < 0.
        \end{align*}
    \end{proof}
\end{thm}
 
\subsection{Comparison and uniqueness of viscosity solutions}\label{sec:uniqueness}
 
To establish the uniqueness of the viscosity solution to the stochastic HJ equation \eqref{SHJB_lim0},
we now introduce a suitable comparison principle.  
Following \cite{qiu_stochastic_hjb,qiu_uniqueness_2019}, we use a stochastic strict-contact argument based on the Snell envelope of an exponentially perturbed difference between the viscosity solution and a test function. Our proof requires neither the strong regularity assumptions of \cite{qiu_stochastic_hjb} nor the penalization approximations of \cite{qiu_uniqueness_2019}. The infinite horizon and unbounded state space create an additional localization problem. The one-sided conditions \eqref{condition:infty-sub}, \eqref{additional_condition_cp-sub}, \eqref{condition:infty}, and \eqref{additional_condition_cp} prevent violations of the desired ordering from escaping to temporal or spatial infinity and ensure attainment of the relevant extrema.

\begin{thm}[Comparison principle]\label{comparison}
Let Assumption~\ref{assumption} hold, and let $\phi\in\mathcal C_{\sF}$.
\begin{enumerate}
\item[(i)] Suppose that $u$ is a viscosity subsolution of \eqref{SHJB_lim0} and that, outside a common $\bP$-null set,
\begin{align}
\liminf_{T\to\infty}\inf_{x\in\bR^+}
    \bigl(\phi(T,x)-u(T,x)\bigr)&\ge0,
    \label{condition:infty-sub}\\
\liminf_{x\to\infty}
    \bigl(\phi(t,x)-u(t,x)\bigr)&\ge0,
    \qquad t\in\bR^+,
    \label{additional_condition_cp-sub}\\
\operatorname{ess}\liminf_{(s,x)\to(t^+,y)}
    \left\{-\mathfrak d_s\phi(s,x)
    -\mathcal H(s,x,\partial_x\phi(s,x))\right\}&\ge0,
    \qquad (t,y)\in\bR^+\times\bR^+.
\end{align}
Then, almost surely,
\[
u(t,x)\le\phi(t,x),
\qquad (t,x)\in\bR^+\times\bR^+.
\]

\item[(ii)] Suppose that $u$ is a viscosity supersolution of \eqref{SHJB_lim0} and that, outside a common $\bP$-null set,
\begin{align}
\limsup_{T\to\infty}\sup_{x\in\bR^+}
    \bigl(\phi(T,x)-u(T,x)\bigr)&\le0,
    \label{condition:infty}\\
\limsup_{x\to\infty}
    \bigl(\phi(t,x)-u(t,x)\bigr)&\le0,
    \qquad t\in\bR^+,
    \label{additional_condition_cp}\\
\operatorname{ess}\limsup_{(s,x)\to(t^+,y)}
    \left\{-\mathfrak d_s\phi(s,x)
    -\mathcal H(s,x,\partial_x\phi(s,x))\right\}&\le0,
    \qquad (t,y)\in\bR^+\times\bR^+. \label{supersolution_condition_sup-clasical}
\end{align}
Then, almost surely,
\[
u(t,x)\ge\phi(t,x),
\qquad (t,x)\in\bR^+\times\bR^+.
\]
\end{enumerate}
\end{thm}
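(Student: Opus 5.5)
We prove (i); part (ii) follows by the symmetric argument with infima and suprema interchanged and $\overline{\mathcal G}$ in place of $\underline{\mathcal G}$. Argue by contradiction: suppose $\bP(u(t_0,x_0)>\phi(t_0,x_0))>0$ for some deterministic $(t_0,x_0)$. By continuity, such a point can be taken rational, and $\Omega_0:=\{u(t_0,x_0)-\phi(t_0,x_0)>\kappa\}$ has positive probability for some $\kappa>0$, with $\Omega_0\in\sF_{t_0}$.

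First, make the supersolution property of $\phi$ strict. Set $\phi^\gamma(t,x):=\phi(t,x)+\gamma e^{-t}$ with small $\gamma>0$. Then $-\mathfrak d_s\phi^\gamma=-\mathfrak d_s\phi+\gamma e^{-s}$, and $\mathcal H$ does not see this perturbation since $\partial_x\phi^\gamma=\partial_x\phi$. Hence
\[
\operatorname{ess}\liminf\{-\mathfrak d_s\phi^\gamma-\mathcal H(s,x,\partial_x\phi^\gamma)\}\ge\gamma e^{-t}>0,
\]
and $\phi^\gamma\in\mathcal C_{\sF}$. Choose $\gamma<\kappa e^{-t_0}/2$, so the violation persists on $\Omega_0$.

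Next, build the Snell envelope. Define $Z_s:=\inf_{y\in\bR^+}(\phi^\gamma-u)(s,y)$ for $s\ge t_0$. This process is continuous and adapted, and $Z_{t_0}<-\kappa/2$ on $\Omega_0$. Conditions \eqref{condition:infty-sub} and \eqref{additional_condition_cp-sub} give two things:
\begin{itemize}
\item $\liminf_{T\to\infty}Z_T\ge0$, so negative values of $Z$ cannot escape to temporal infinity;
\item for each $s$, the infimum over $y$ is attained on a bounded, measurably selectable set whenever $Z_s<0$, so they cannot escape to spatial infinity either.
\end{itemize}
Let $Y$ be the Snell envelope (lower version) $Y_t=\essinf_{\bar\tau\in\mathcal T^t}\bE_{\sF_t}[Z_{\bar\tau}]$. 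Take the optimal stopping time $\tau^*:=\inf\{s\ge t_0: Y_s=Z_s\}$. It is finite a.s. on $\{Y_{t_0}<0\}\supseteq\Omega_0$ because $Z$ is eventually nonnegative. Since $Y$ is a submartingale and $Y_{t_0}\le Z_{t_0}<0$, we get $\bP(Z_{\tau^*}<0)>0$.

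Choose an $\sF_{\tau^*}$-measurable minimizer $\xi$ of $y\mapsto(\phi^\gamma-u)(\tau^*,y)$ on $\Omega_{\tau^*}:=\{Z_{\tau^*}<0\}\cap\{\tau^*<\infty\}$. Define
\[
\psi(s,x):=\phi^\gamma(s,x)-\bE_{\sF_s}[Z_{\tau^*}]\quad\text{(stopped appropriately)}.
\]
This is the martingale correction: it subtracts a process independent of $x$, so $\mathfrak d_s\psi=\mathfrak d_s\phi^\gamma$ on the relevant interval and $\partial_x\psi=\partial_x\phi^\gamma$. To keep $\psi\in\mathcal C_{\sF}$, truncate or stop so that the $C_0\oplus\bR$ structure is preserved; the added constant-in-$x$ term belongs to the $\bR$ component.

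We then check that $\psi\in\underline{\mathcal G}u(\tau^*,\xi;\Omega_{\tau^*})$. At $\tau^*$ we have $(\psi-u)(\tau^*,\xi)=Z_{\tau^*}-Z_{\tau^*}=0$. For $\bar\tau\ge\tau^*$, the optimality of $\tau^*$ gives $\bE_{\sF_{\tau^*}}[Z_{\bar\tau}]\ge Y_{\tau^*}=Z_{\tau^*}$, and hence
\[
\bE_{\sF_{\tau^*}}\bigl[\inf_y(\psi-u)(\bar\tau\wedge\hat\tau,y)\bigr]\ge0.
\]
The localizing $\hat\tau>\tau^*$ is chosen as the first time after $\tau^*$ at which $Z$ reaches $Z_{\tau^*}/2$, so that it is strictly larger and finite.

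Finally, the subsolution inequality \eqref{subsol} for $u$ at $(\tau^*,\xi)$ gives
\[
\operatorname{ess}\liminf\{-\mathfrak d_s\psi-\mathcal H(s,z,\partial_z\psi)\}\le0
\]
on $\Omega_{\tau^*}$, while strictness gives $\ge\gamma e^{-\tau^*}>0$. This is a contradiction.

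The main obstacle is the tangency step. It requires verifying that the optimal-stopping identity for the Snell envelope yields exactly the essinf condition in $\underline{\mathcal G}$, and that the martingale-corrected $\psi$ still lies in $\mathcal C_{\sF}$, which needs the $L^2$ integrability of the martingale representation of $Z_{\tau^*}$. A further point is that $\tau^*$ must be finite with $\bP(\Omega_{\tau^*})>0$. I would handle this first by replacing $Z$ with the stopped version at a large deterministic horizon $T$, where \eqref{condition:infty-sub} ensures $Z_T>-\kappa/4$, and then showing that the submartingale property forces the negative mass to be realized before $T$.
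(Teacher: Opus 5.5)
Your proposal is correct and follows essentially the paper's route: a Snell envelope of the spatially optimized gap, contact at the optimal stopping time with an $\sF_{\tau^*}$-measurable optimizer, a martingale correction by $\bE_{\sF_s}$ of the stopped reward, and an exponential time shift for strictness. The only difference in bookkeeping is that the paper proves (ii) using the positive part $M_s$ and a reward whose terminal value $Y_\infty=\frac{\delta}{2}e^{-rt_0}>0$ forces a strictly positive gap at contact, whereas you keep the signed gap and restrict to $\{Z_{\tau^*}<0\}$, which works equally well.

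One slip needs fixing. The claim that $\tau^*<\infty$ a.s.\ on $\{Y_{t_0}<0\}$ does not follow from $\liminf_{T\to\infty}Z_T\ge0$, and the submartingale property of $Y$ gives the inequality in the wrong direction. What you actually need, $\bP(\Omega_0\cap\{\tau^*<\infty,\ Z_{\tau^*}<0\})>0$, follows instead from the optimality identity $Y_{t_0}=\bE_{\sF_{t_0}}[Z_{\tau^*}]$ with the convention $Z_\infty:=\liminf_{T\to\infty}Z_T\ge0$. After that, replace $\tau^*$ by the finite stopping time $\tau^*\wedge N$ on $\Omega_{\tau^*}\cap\{\tau^*\le N\}$, as the paper does.
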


\begin{proof}
We prove only (ii), since (i) is analogous. Suppose, to the contrary, that some $(t_0,x_0)\in\bR^+\times\bR^+$ satisfies
\[
\bP\bigl(\phi(t_0,x_0)>u(t_0,x_0)\bigr)>0.
\]
Since
\(
\{\phi(t_0,x_0)>u(t_0,x_0)\}
=\bigcup_{n\ge1}
\left\{\phi(t_0,x_0)-u(t_0,x_0)>\frac{1}{n}\right\},
\)
there exists $\delta>0$ such that the event
\[
B:=\left\{ \phi(t_0,x_0)-u(t_0,x_0)>\delta \right\}\in\sF_{t_0}
\]
has positive probability. We now construct a stochastic strict contact. 

\medskip

\textbf{Step 1.} Reduce the positive gap between $\phi$ and $u$ to the scalar process
\[
M_s :=\sup_{x\in\bR^+}\bigl(\phi(s,x)-u(s,x)\bigr)^+,\quad  s\ge t_0.
\]
By construction, $M_{t_0}\ge\delta$ on $B$. Condition \eqref{additional_condition_cp} gives $(\phi(s,\cdot)-u(s,\cdot))^+\in C_0(\bR^+)$, so the supremum is attained; continuity of $\phi-u$ in the function-space norm then implies continuity of $M$. By \eqref{condition:infty},  
\[
M_T
=\left(\sup_{x\in\bR^+}\bigl(\phi(T,x)-u(T,x)\bigr)\right)^+ \to 0
\]
almost surely as $T\to\infty$. Further, since 
\[
0\le M_T
\le \sup_{s,x}|\phi(s,x)|+\sup_{s,x}|u(s,x)| \in L^2(\Omega)
\]
dominated convergence yields convergence in $L^2(\Omega)$, and hence square integrability of the process $Y$ below. 

\medskip

\textbf{Step 2.} 
We construct a finite contact time $\tau_N$, an event $B_N\in\sF_{\tau_N}$ of positive probability, and an $\sF_{\tau_N}$-measurable maximizer $\xi_N\in\bR^+$ such that
\begin{equation}\label{max_attainability}
\phi(\tau_N,\xi_N)-u(\tau_N,\xi_N)=M_{\tau_N} > 0
\quad\text{on} \quad B_N.
\end{equation}
We introduce the continuous process on $[t_0,\infty]$ by 
\[
Y_s=M_s+\frac{\delta}{2}\bigl(e^{-rt_0}-e^{-rs}\bigr), \quad s\geq t_0,
\quad \text{and} \quad
Y_\infty=\lim_{T\to\infty}Y_T=\frac{\delta}{2}e^{-rt_0}.
\] 

Let $\mathcal T^s_\infty$ be the stopping times with values in $[s,\infty]$ and define the Snell envelope
\[
Z_s:=\operatorname{ess\,sup}_{\tau\in\mathcal T^s_\infty}
\bE_{\sF_s}[Y_\tau].
\]
By standard Snell-envelope results in
\cite[Appendix D, especially the infinite-horizon setup on p.~349,
Theorem D.7 on p.~354, and Theorem D.12 on p.~358]
{karatzas1998methods}, the stopping time
\[
\overline\tau:=\inf\{s\ge t_0:Y_s=Z_s\},
\qquad \inf\varnothing:=\infty,
\]
is optimal and
\begin{equation*}
Z_{t_0}=\bE_{\sF_{t_0}}[Y_{\overline\tau}]. 
\end{equation*}

Set $A:=\{\overline\tau<\infty\}$. We use two observations.
\begin{itemize}
	\item[(i)] On the set $A$ the Snell-envelope property yields
\[
Y_{\overline\tau}=Z_{\overline\tau}
\ge\bE_{\sF_{\overline\tau}}[Y_\infty]
=\frac{\delta}{2}e^{-rt_0}.
\]
Consequently,
\begin{equation*}
M_{\overline\tau}\ge\frac{\delta}{2}e^{-r\overline\tau}>0
\quad\text{on} \quad A.
\end{equation*}
\item[(ii)] The set $B\cap A$ has strictly positive probability. Indeed, on $B$, we have
\[
0< \delta-\frac{\delta}{2}e^{-rt_0} \leq Z_{t_0}-\bE_{\sF_{t_0}}[Y_\infty]
=\bE_{\sF_{t_0}}\left[
    (Y_{\overline\tau}-Y_\infty)\mathbf1_A
\right].
\]
Multiplying by $\mathbf1_B$ shows that $\bP(B\cap A)>0$; otherwise, the right-hand side would vanish on $B$. 
\end{itemize}

By (ii), 
there exists $N>t_0$ such that the set
\[
B_N:=B\cap\{\overline\tau\le N\} \subset B\cap A
\]
has positive probability. Let us set 
\[
	\tau_N:=\overline\tau\wedge N < \infty.
\]	
Then $B_N\in\sF_{\tau_N}$ and $\tau_N=\overline\tau$ on $B_N$; by (i),
\[
	M_{\tau_N} > 0 \quad \text{on} \quad B_N.
\]
Hence, on $B_N$, the continuous function $x\mapsto(\phi-u)(\tau_N,x)^+$ attains the positive maximum $M_{\tau_N}$. Measurable selection gives an $\sF_{\tau_N}$-measurable $\xi_N$ satisfying \eqref{max_attainability}.

\medskip

\textbf{Step 3.} The point $(\tau_N,\xi_N)$ gives pathwise contact, whereas Definition~\ref{def_vis_sol} requires conditional-expectation tangency over future stopping times. We obtain it by a martingale correction. For $s\ge0$, set
\begin{equation*}
\mathcal M_s:=\bE_{\sF_s}[Y_{\overline\tau}] \quad \text{and} \quad
\Phi(s,x)
:=\phi(s,x)+\frac{\delta}{2}\bigl(e^{-rt_0}-e^{-rs}\bigr)-\mathcal M_s.
\end{equation*}
Since $M_s$ is uniformly dominated by an $L^2(\Omega)$ random variable and $Y$ is of class D, we have $Y_{\overline\tau}\in L^2(\Omega)$. Hence, the martingale-representation theorem yields a 
representation integrand in $L^2(\Omega\times \bR^+)$ for $\mathcal M$. Moreover, $\mathcal M$ is independent of $x$, and $s\mapsto e^{-rs}$ has an $L^2$-integrable derivative. It follows that 
\[
	\Phi\in\mathcal C_{\sF}
	\quad \text{and} \quad 
(\Phi-u)(\tau_N,\xi_N)=0 ~~ \text{on} ~~ B_N.
\]
To construct the required stopping times we set
\[
\sigma_N:=\widehat\tau\wedge(\tau_N+1) \quad \text{where} \quad \widehat\tau:=\inf\{s\ge\overline\tau:M_s=0\},
\]
and for any $\widetilde\tau\in\mathcal T^{\tau_N}$, we put $\theta:=\widetilde\tau\wedge\sigma_N$. As
\(
\sup_{y\in\bR^+}(\phi-u)(\theta,y)\le M_\theta,
\)
the tower property and the Snell-envelope property yield, on $B_N$, that
\begin{align*}
\bE_{\sF_{\tau_N}}
\left[\sup_{y\in\bR^+}(\Phi-u)(\theta,y)\right]
\le \bE_{\sF_{\tau_N}}[Y_\theta]-\mathcal M_{\tau_N}
\le Z_{\tau_N}-Y_{\overline\tau}=0.
\end{align*}
Taking $\widetilde\tau=\tau_N$ gives equality. Thus
\(
\Phi\in\overline{\mathcal G}u(\tau_N,\xi_N;B_N).
\)
These stopping times are admissible in the definition of $\overline{\mathcal G}u$.
Applying the viscosity-supersolution property on $B_N$ and using \eqref{supersolution_condition_sup-clasical} gives
\begin{align*}
0
&\le\operatorname{ess}\limsup_{(s,x)\to(\tau_N^+,\xi_N)}
\left\{-\mathfrak d_s\Phi(s,x)
-\mathcal H(s,x,\partial_x\Phi(s,x))\right\}\\
&=-\frac{r\delta}{2}e^{-r\tau_N}
+\operatorname{ess}\limsup_{(s,x)\to(\tau_N^+,\xi_N)}
\left\{-\mathfrak d_s\phi(s,x)
-\mathcal H(s,x,\partial_x\phi(s,x))\right\}\\
&\le-\frac{r\delta}{2}e^{-r\tau_N}<0,
\end{align*}
leading to a contradiction. Hence $u\ge\phi$. 
\end{proof}

The comparison principle also identifies any classical solution with $\tilde V$.

\begin{defn}\label{defn-classical-solution}
A function $\phi\in \mathcal C_{\sF}\cap \mathcal S^2(C_0(\bR^+))$   is a classical solution to the stochastic Hamilton--Jacobi equation \eqref{SHJB_lim0} 
if it satisfies the following conditions:
\begin{enumerate}
    \item[(i)]  $\lim_{t\to\infty}\sup_{x\in\bR^+}|\phi(t,x)|=0$, a.s.;
    \item[(ii)] for $\bP\otimes dt$-a.e. $(\omega,t)\in\Omega\times[0,\infty)$,
        \begin{equation*}
      -\mathfrak{d}_t \phi(t,x)-\mathcal{H}(t, x, \partial_x \phi(t,x)) = 0,  \text{ for all } x\in\bR^+. 
    \end{equation*}
\end{enumerate}
\end{defn}

Theorem~\ref{comparison} immediately yields the following corollary.
\begin{cor}\label{corollary_classical_shjb}
    Let Assumption~\ref{assumption} hold. 
    If a classical solution to the stochastic Hamilton--Jacobi equation \eqref{SHJB_lim0} exists, then it is the unique viscosity solution and coincides with the value function $\tilde V$ defined in \eqref{dynamic-value-functional}. In particular, the classical solution is unique.
\end{cor}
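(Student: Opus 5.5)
We take a classical solution $\phi\in\mathcal C_{\sF}\cap\mathcal S^2(C_0(\bR^+))$ and show $\phi=\tilde V$. We do this by applying Theorem~\ref{comparison} twice, with $u=\tilde V$, which is a viscosity solution by Theorem~\ref{vis_existence}. Uniqueness of viscosity solutions is treated the same way, with $u$ an arbitrary viscosity solution.

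\emph{Step 1: the Hamiltonian conditions.} Since $\phi$ satisfies $-\mathfrak d_t\phi-\mathcal H(t,x,\partial_x\phi)=0$ for $\bP\otimes dt$-a.e.\ $(\omega,t)$ and all $x$, the essential liminf and essential limsup of $-\mathfrak d_s\phi-\mathcal H(s,x,\partial_x\phi)$ at every $(t^+,y)$ are both $0$ almost surely. Here we use that the operator $\operatorname{ess}\liminf$/$\operatorname{ess}\limsup$ ignores $ds$-null sets. Hence both the $\ge0$ hypothesis of part (i) and the $\le0$ hypothesis \eqref{supersolution_condition_sup-clasical} of part (ii) hold outside a common null set. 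To get a single null set, we use that the equation holds for a.e.\ $t$ simultaneously for all $x$.

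\emph{Step 2: the boundary and asymptotic conditions.} Both $\phi$ and $u$ lie in $\mathcal S^2(C_0(\bR^+))$. Therefore $\phi(t,x)-u(t,x)\to0$ as $x\to\infty$ for each $t$, almost surely, which gives \eqref{additional_condition_cp-sub} and \eqref{additional_condition_cp}. At temporal infinity, $\sup_x|\phi(T,x)|\to0$ by Definition~\ref{defn-classical-solution}(i). For $u=\tilde V$ we have $0\le\tilde V(T,x)\le e^{-rT}\int_0^\infty e^{-rs}/4\,ds$, hence $\sup_x|\tilde V(T,x)|\le \frac{e^{-rT}}{4r}\to0$. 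For a general viscosity solution $u$, Definition~\ref{def_vis_sol} gives $\limsup_T\sup_y u\le0$ and $\liminf_T\inf_y u\ge0$, so $\sup_x|u(T,x)|\to0$. This yields \eqref{condition:infty-sub} and \eqref{condition:infty}.

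\emph{Step 3: conclusion.} Theorem~\ref{comparison}(i) gives $u\le\phi$, and Theorem~\ref{comparison}(ii) gives $u\ge\phi$, almost surely for all $(t,x)$. Taking $u=\tilde V$ gives $\phi=\tilde V$, so a classical solution is a viscosity solution and equals the value function. Taking $u$ an arbitrary viscosity solution gives $u=\phi=\tilde V$, so the viscosity solution is unique. In particular, any two classical solutions both equal $\tilde V$.

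The only delicate point is Step 1. Definition~\ref{defn-classical-solution} states the equation only $\bP\otimes dt$-a.e., while Theorem~\ref{comparison} asks for the ess-limit conditions at every $(t,y)$ outside one null set. We handle this with Fubini: we pick a single $\bP$-null set off which the equation holds for a.e.\ $t$ and all $x$. The ess-limit, which is taken with respect to Lebesgue measure in $s$, is then unaffected by the exceptional $t$'s.
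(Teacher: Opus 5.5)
Your proposal is correct and follows the paper's route: the paper states that the corollary is an immediate consequence of Theorem~\ref{comparison}. Your argument spells this out by applying parts (i) and (ii) with $u$ an arbitrary viscosity solution (in particular $\tilde V$, via Theorem~\ref{vis_existence}) and $\phi$ the classical solution, after checking the spatial and temporal asymptotic conditions and noting that the $\bP\otimes dt$-a.e.\ equation makes both essential limits vanish outside a single null set.
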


We now turn to uniqueness. The proof uses a squeezing argument inspired by Perron's method. In stochastic HJ theory \cite{ekren2016viscosity-1,ekren2016viscosity-2,qiu_stochastic_hjb,qiu_uniqueness_2019}, regular barriers are typically built through piecewise Markovian approximations of the coefficients. We avoid this Markovianization by adding an independent Brownian perturbation $B$ to the state dynamics. The resulting upper and lower barriers are value functions of non-Markovian control problems with improved regularity.

\begin{thm}\label{uniqueness}
    Under Assumption~\ref{assumption}, the value function \eqref{dynamic-value-functional} is the unique viscosity solution to the stochastic Hamilton--Jacobi equation \eqref{SHJB_lim0}.
\end{thm}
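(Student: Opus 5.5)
Existence is Theorem~\ref{vis_existence}, so only uniqueness remains. Let $u$ be any viscosity solution of \eqref{SHJB_lim0}. The plan is to build, for each $\varepsilon>0$, regular random fields $\overline\phi^\varepsilon,\underline\phi^\varepsilon\in\mathcal C_{\sF}$ with three properties. First, $\overline\phi^\varepsilon$ is a classical supersolution and $\underline\phi^\varepsilon$ a classical subsolution, in the sense of the ess\,liminf/ess\,limsup hypotheses of Theorem~\ref{comparison}. Second, they satisfy the one-sided asymptotic conditions \eqref{condition:infty-sub}--\eqref{additional_condition_cp} relative to $u$. Third,
\[
\sup_{t,x}|\overline\phi^\varepsilon-\underline\phi^\varepsilon|\to0 \quad\text{as } \varepsilon\to0 .
\]
Theorem~\ref{comparison}(i) then gives $u\le\overline\phi^\varepsilon$, and part (ii) gives $u\ge\underline\phi^\varepsilon$. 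The same bounds hold for $\tilde V$. Letting $\varepsilon\to0$ squeezes $u$ and $\tilde V$ together, so $u=\tilde V$.

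\textbf{Construction of the barriers.} Enlarge the probability space by a Brownian motion $B$ independent of $W$. Perturb the state as
\[
X^{\varepsilon}_s=x+\int_t^s\alpha_u\,du+\varepsilon(B_s-B_t),
\]
and extend $\hat F(s,\cdot)$ to negative arguments by the constant value $\hat F(s,0)$, so that it stays Lipschitz and bounded. Let $V^\varepsilon(t,x)$ be the value function with reward $e^{-rs}\alpha_s(1-\alpha_s)\hat F(s,X^\varepsilon_s)$, with controls adapted to the joint filtration and conditional expectation taken given $\sF_t$. Integrating out $B$ makes $V^\varepsilon$ the value of a problem with the uniformly superparabolic generator $\tfrac{\varepsilon^2}{2}\partial_{xx}$. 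I would then invoke backward SPDE regularity (the integrability remark after Assumption~\ref{assumption} supplies the $L^p$ bounds on $\hat F$) to get $V^\varepsilon$ with H\"older-continuous $\partial_xV^\varepsilon$ and a genuine semimartingale decomposition. Hence $V^\varepsilon\in\mathcal C_{\sF}$ and
\[
-\mathfrak d_tV^\varepsilon=\tfrac{\varepsilon^2}{2}\partial_{xx}V^\varepsilon+\mathcal H(t,x,\partial_xV^\varepsilon).
\]
By the Lipschitz property, $|V^\varepsilon-\tilde V|\le C\varepsilon\,\bE|B_\cdot|$ weighted by $e^{-rs}$, which is $O(\varepsilon)$ uniformly in $(t,x)$.

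\textbf{Removing the second-order term.} The term $\tfrac{\varepsilon^2}{2}\partial_{xx}V^\varepsilon$ is not sign-definite, so I would correct it in two ways. First, a spatial shift $V^\varepsilon(t,x\pm c\varepsilon)$ exploits monotonicity of $\hat F$ in $x$ to absorb the boundary/extension errors. Second, a linear BSDE correction adds the process
\[
\overline\psi_t=\bE_{\sF_t}\Big[\int_t^\infty\big(\tfrac{\varepsilon^2}{2}\partial_{xx}V^\varepsilon\big)^{-}\,ds\Big]
\]
(and the analogous positive part for $\underline\phi^\varepsilon$). Here an $x$-uniform bound, or a bound after passing to its supremum in $x$, is meant, so the correction is independent of $x$ and the same martingale-representation trick as in Step~3 of Theorem~\ref{comparison} keeps it in $\mathcal C_{\sF}$. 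Adding a small strictly decreasing term $\varepsilon e^{-rt}$ makes the super/sub inequalities strict. Comparing $\partial_x$ of the two barriers shows that the Hamiltonian evaluated at the corrected field changes only through $x$-independent terms, and these do not enter $\mathcal H$ since the correction has zero spatial gradient.

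\textbf{Main obstacle.} The hard step is to show that the BSDE corrections vanish as $\varepsilon\to0$, i.e.\ that
\[
\bE\int_0^\infty\varepsilon^2\sup_x|\partial_{xx}V^\varepsilon|\,ds\to0,
\]
or a weaker version of this. I would first try to bound $\varepsilon^2\partial_{xx}V^\varepsilon$ via semiconcavity: the Hamiltonian is convex in $p$, which gives a one-sided estimate $\partial_{xx}V^\varepsilon\le C/\varepsilon$ through the Gaussian smoothing by $B$. That estimate would control $(\partial_{xx}V^\varepsilon)^+$ at order $\varepsilon$. For the other sign, I would integrate by parts in $x$ and use that $\partial_xV^\varepsilon$ is uniformly bounded by $L_V$ and has finite total variation on $\bR^+$, since $V^\varepsilon$ is monotone and bounded. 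The last checks are the asymptotic conditions: they follow from the uniform $O(\varepsilon)$ closeness of both barriers to $\tilde V$ together with $\tilde V,u\to0$ as $t\to\infty$. For the $x\to\infty$ condition I would use $C_0$-decay, since $\hat F(t,x)\to0$ as $x\to\infty$, together with the fact that the $x$-independent corrections carry the right sign.
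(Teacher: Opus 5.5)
\textbf{There is a genuine gap.} The overall architecture is right: you squeeze an arbitrary viscosity solution between barriers built from an independent Brownian perturbation plus a linear BSDE correction, via Theorem~\ref{comparison}. What is missing is the step that actually removes the viscous term, and the substitute you propose does not work.

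\textbf{Where your route fails.} The barriers must be classical sub/supersolutions of the \emph{first-order} equation. You try to dominate $\tfrac{\varepsilon^2}{2}\partial_{xx}V^\varepsilon$ by an $x$-independent BSDE driver $\sup_x(\tfrac{\varepsilon^2}{2}\partial_{xx}V^\varepsilon)^{\mp}$. This requires $\bE\int_0^\infty\varepsilon^2\sup_x|\partial_{xx}V^\varepsilon|\,ds\to0$, and none of your arguments delivers it.
\begin{itemize}
\item A supremum over controls transfers at best a \emph{lower} bound on $\partial_{xx}$ (semiconvexity) from the individual payoffs to $V^\varepsilon$, not the upper bound you claim.
\item The Gaussian-smoothing heuristic applies to a fixed control independent of $B$, not to a supremum over $B$-adapted controls.
\item Integration by parts gives only $L^1$-in-$x$ information, which cannot control $\sup_x$. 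Also, monotonicity of $V^\varepsilon$ makes $\partial_xV^\varepsilon$ integrable, not of bounded variation.
\item More fundamentally, in general one can only expect $\varepsilon^2|\partial_{xx}V^\varepsilon|=O(1)$ in the viscous layers (of width $O(\varepsilon^2)$) around points where $\partial_x\tilde V$ jumps. Under Assumption~\ref{assumption}, $\tilde V$ is only known to be Lipschitz in $x$, so such points are not excluded and your correction need not vanish.
\end{itemize}
A deterministic shift $x\pm c\varepsilon$ does not change any of this.

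\textbf{The missing idea: a random shift.} Compose with $x\mapsto x-\varepsilon B_t$. Take $U^\varepsilon$ to be the perturbed value function, conditioned on the enlarged filtration and with horizon truncated at $T^\varepsilon$. It is $\sF_t$-measurable and solves the superparabolic BSPDE with $W$-martingale part only. Apply the It\^o--Kunita--Wentzell formula to $U^\varepsilon(t,x-\varepsilon B_t)$. The It\^o correction $+\tfrac{\varepsilon^2}{2}\partial_{xx}U^\varepsilon$ from the quadratic variation of $\varepsilon B$ cancels the viscous term exactly, and the cross-variation vanishes because $W$ and $B$ are independent. What remains is $-\mathfrak d_t[U^\varepsilon(t,x-\varepsilon B_t)]=\mathcal H\bigl(t,x-\varepsilon B_t,\partial_xU^\varepsilon(t,x-\varepsilon B_t)\bigr)$.

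The only error now sits in the coefficient:
\[
|\mathcal H(t,x-\varepsilon B_t,q)-\mathcal H(t,x,q)|\le\tfrac14e^{-rt}L_{\hat F}\varepsilon|B_t|.
\]
This bound is $x$-independent and involves no derivatives of the solution. It is absorbed by the linear BSDE with driver $\varepsilon L_{\hat F}|B_t|e^{-rt}$, whose solution is $O(\varepsilon)$ in $L^1$. Adding $\tfrac1{4r}e^{-r(t\vee T^\varepsilon)}$ makes the upper barrier a supersolution for $t\ge T^\varepsilon$.

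Two consequences of this construction matter. First, the barriers are adapted only to the enlarged filtration, so comparison must be run there; this is why one checks that the viscosity property of an arbitrary solution survives the independent enlargement. Second, the gap closes in $L^1$ at each fixed $(t,x)$ rather than uniformly, which is all the squeeze requires.
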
 
    
    \begin{proof}
We squeeze any viscosity solution between $\underline V^\epsilon$ and $\overline V^\epsilon$ using Theorem~\ref{comparison}, and then show that $\overline V^\epsilon-\underline V^\epsilon\to0$ as $\epsilon\to0$. Together with Theorem~\ref{vis_existence}, this proves uniqueness. In what follows $\check V$ denotes any viscosity solution of \eqref{SHJB_lim0}. By Definition~\ref{def_vis_sol}, 
\[
    \check V\in\mathcal S^2(C_0(\bR^+)) \quad \text{and} \quad \lim_{t\rightarrow \infty} \sup_{x\in\bR^+} |\check V(t,x)| =0.
\]

        \textbf{Step 1.} Let $B=\{B_t:t\ge0\}$ be a standard Brownian motion independent of the original $\sigma$-field $\sF$. By the usual product-space extension, we may work on a stochastic basis carrying both $W$ and $B$. Let $\widetilde{\sF}_t$ be the usual augmentation of
        \[
        \sF_t\vee\sigma(B_s:0\le s\le t),
        \]
        and let $\widetilde{\cA}$ denote the $[0,1]$-valued processes progressively measurable with respect to $(\widetilde{\sF}_t)_{t\ge0}$. Since $B$ is independent of the original $\sigma$-field,
        \[
        \bE\bigl[1_{\{\overline X>x\}}\mid\widetilde{\sF}_t\bigr]
        =\bE\bigl[1_{\{\overline X>x\}}\mid\sF_t\bigr]
        =\hat F(t,x).
        \]
        We identify the original random fields with their canonical lifts to the enlarged space. 
       On the enlarged space, ${\cal C}_{\widetilde F}$ is understood as in Definition \ref{cf1}, with $W$ replaced by the two-dimensional Brownian motion $(W,B)$  and
$\mathfrak{d}_{\omega}\phi$ correspondingly vector-valued. Conditioning with respect to the independent $B$-coordinate shows that the viscosity sub- and supersolution properties of $\check V$ are preserved under this independent enlargement; the proof of Theorem~\ref{comparison} is unchanged with $\sF$ replaced by $\widetilde{\sF}$. We therefore apply the comparison principle on the enlarged stochastic basis. For notational simplicity, expectations below are again denoted by $\bE$.

\medskip 

\textbf{Step 2.} 
     We next construct regular upper and lower barriers for the original value function. The construction has two ingredients, both introduced below. First, we will perturb the state dynamics by the independent Brownian motion \(\epsilon B\) and truncate the horizon at \(T^\epsilon=-\log(\epsilon)/r\). This leads to auxiliary value functions \(\underline U^\epsilon\) and \(\overline U^\epsilon\) associated with a uniformly superparabolic BSPDE and hence possessing the spatial regularity required for the comparison argument. Second, shifting these functions back to the original state variable will replace \(\hat F(t,x)\) by \(\hat F(t,x-\epsilon B_t)\) in the Hamiltonian. By the Lipschitz continuity of \(\hat F\), the resulting error is of order \(\epsilon L_{\hat F}|B_t|e^{-rt}\). We will compensate for this error by a linear BSDE with solution \(Y^\epsilon\).     By general BSDE theory (see, e.g., \cite{briand2003lp}), the linear backward stochastic differential equation
         \[
        Y^{\epsilon}_s =  \int_{s\land T^{\epsilon}}^{T^{\epsilon}} \! \epsilon L_{\hat F} |B_l| e^{-rl} dl - \int_{s\land T^{\epsilon}}^{T^{\epsilon}}\! Z^{\epsilon}_l dB_l, \quad s\in [0,\infty),
         \]
          admits a unique adapted solution $(Y^{\epsilon},Z^{\epsilon})$, equivalently characterized by
         \[
         -\mathfrak{d}_tY^{\epsilon}_t =  \epsilon L_{\hat F}|B_t| e^{-rt}, \quad t\in [0,T^{\epsilon}); \quad Y^{\epsilon}_{t}=0, \quad t\ge T^{\epsilon}.
         \]
         In particular,
         \[
         Y_t^\epsilon
         =\bE_{\widetilde{\sF}_t}\left[\int_{t\land T^\epsilon}^{T^\epsilon}
         \epsilon L_{\hat F}|B_s|e^{-rs}\,ds\right]\ge0.
         \]

To define the functions \(\underline U^\epsilon\) and \(\overline U^\epsilon\), we extend the survival function to $x<0$ by $\hat F(t,x)=\hat F(t,|x|)$. For $\epsilon>0$ and $(t,x)\in\bR^+\times\bR$, define
        \begin{align}
            \overline U^{\epsilon}(t,x) &= \esssup_{\alpha\in\widetilde{\cA}} \bE_{\widetilde{\sF}_{t}} \left[ \int_{t\land T^{\epsilon}}^{T^{\epsilon}}\!e^{-rs}\hat F(s,X_s^{t,x;\alpha,B})\alpha_s(1-\alpha_s)ds \right] + \frac{1}{4r}e^{-r(t\vee T^{\epsilon})},
            \label{upper-appr-U}\\
            \underline U^{\epsilon}(t,x) &= \esssup_{\alpha\in\widetilde{\cA}} \bE_{\widetilde{\sF}_{t}} \left[ \int_{t\land T^{\epsilon}}^{T^{\epsilon}}\!e^{-rs}\hat F(s,X_s^{t,x;\alpha,B})\alpha_s(1-\alpha_s)ds \right],
            \label{lower-appr-U}
        \end{align}
        where $X^{t,x;\alpha,B}$ satisfies the following stochastic differential equation
        \begin{equation*}
            X_s^{t,x;\alpha,B} = x + \int_t^s\!\alpha_r\,\ud r + \epsilon (B_s-B_t),\quad s\in[t,\infty).
        \end{equation*}
        Clearly, $\underline U^\epsilon\le\overline U^\epsilon$, and
        \[\lim_{x\to\infty}\underline U^\epsilon(t,x)=0,
\qquad
\lim_{x\to\infty}\overline U^\epsilon(t,x)
=\frac1{4r}e^{-r(t\vee T^\epsilon)}, \quad \text{for all } t\ge 0, \text{ a.s.}\]
The perturbation $\epsilon B$ makes the associated BSPDE superparabolic. By standard BSPDE theory, e.g.,~\cite{du2012p}, the equation
        \begin{equation*}
            \left\{
            \begin{aligned}
                -dV_{T^{\epsilon}}(t,x) &= \left[\frac{1}{2}\epsilon^2 \partial_{xx}^2V_{T^{\epsilon}}(t,x) + \mathcal H(t,x,\partial_x V_{T^{\epsilon}}(t,x))\right]dt - \psi(t,x)dW_t,
                \\
                V_{T^{\epsilon}}(T^{\epsilon},x) &= 0,
            \end{aligned}
            \right.
        \end{equation*}
        admits a unique adapted solution pair 
        $(V_{T^{\epsilon}},\psi)$ 
         satisfying, for each $p\in [2,\infty)$, 
        \[
        \bE\left[ \int_0^{T^{\epsilon}}\! \|V_{T^{\epsilon}}(t,\cdot)\|^p_{W^{2,p}} dt
        +  \int_0^{T^{\epsilon}}\! 
            \|\psi(t,\cdot)\|^2_{W^{1,2}} dt  + \sup_{t\in [0,T^{\epsilon}]} \|V_{T^{\epsilon}}(t,\cdot)\|^p_{W^{1,p}} \right] < \infty.
        \]
        By the Sobolev embedding theorem, we have $V_{T^{\epsilon}}\in L^p(\Omega\times [0,T^{\epsilon}];C^{1,1-\frac{1}{p}}(\bR))\cap L^p(\Omega;L^{\infty}(0,T^{\epsilon};C^{1-\frac{1}{p}}(\bR)))$, 
        and thus,
        \[
        \bE\left[ \int_0^{T^{\epsilon}}\! \|V_{T^{\epsilon}}(t,\cdot)\|^p_{C^{1,1-\frac{1}{p}}} dt + \sup_{t\in [0,T^{\epsilon}]} \|V_{T^{\epsilon}}(t,\cdot)\|^p_{C^{1-\frac{1}{p}}} \right] < \infty.
        \]
        The generalized It\^o--Kunita--Wentzell formula \cite[Theorem 3.1]{YangTang2013SdeRandomCoeff_BSPDE} and standard verification arguments as in \cite[Section 3.2]{peng_shjb} yield 
\[
 	V_{T^\epsilon} (t,x) = \underline U^{\epsilon}(t,x) \quad \mbox{for all $(t,x)\in[0,T^{\epsilon}]\times \bR$ a.s.}
\]
The barriers are therefore 
\[
\underline V^{\epsilon}(t,x)
=\underline U^{\epsilon}(t,x-\epsilon B_t)-Y^{\epsilon}_t,
\qquad
\overline V^{\epsilon}(t,x)
=\overline U^{\epsilon}(t,x-\epsilon B_t)+Y^{\epsilon}_t.
\] 
 
Fix \(p>2\) and choose \(1/p<\beta<1-1/p\). The \(W^{2,p}(\bR)\)-estimate and one-dimensional Sobolev embedding give the spatial \(C^\beta\)-regularity in Definition~\ref{cf1}(ii), while the corresponding \(L^p\)-bounds yield the required \(L^2\)-bounds on \([0,T^\epsilon]\). The extensions beyond \(T^\epsilon\) and the BSDE estimates for \(Y^\epsilon\) give global time-integrability. Hence 
 \[
  	\underline V^\epsilon,\overline V^\epsilon\in\mathcal C_{\widetilde{\sF}}.
\]

Applying the It\^o--Kunita--Wentzell--Krylov formula \cite{krylov2011ito} to $\underline U^{\epsilon}(t,x-\epsilon B_t)$ gives
        \begin{equation*}
            \left\{
            \begin{aligned}
                -\mathfrak{d}_t \underline V^{\epsilon}(t,x) &= \mathcal H(t,x - \epsilon B_t,\partial_x \underline U^{\epsilon}(t,x - \epsilon B_t)) -\epsilon L_{\hat F} |B_t| e^{-rt}
                \\
                &=  \mathcal H(t,x - \epsilon B_t,\partial_x \underline V^{\epsilon}(t,x)) -\epsilon L_{\hat F} |B_t| e^{-rt}
                \\
                \underline V^{\epsilon}(T^{\epsilon},x) &= 0.
            \end{aligned}
            \right.
        \end{equation*}
        For $t\ge T^{\epsilon}$, we have $\underline V^{\epsilon}(t,x)=0$, and thus simple calculations yield 
        \begin{align*}
              -\mathfrak{d}_t \underline V^{\epsilon}(t,x) - \mathcal H(t,x,\partial_x \underline V^{\epsilon}(t,x)) 
             &= -\frac{1}{4}e^{-rt}\hat F(t,x)\le 0, \quad \text{for all }x\in\bR^+,\ \bP\otimes dt\text{-a.e. on }\Omega\times[T^{\epsilon},\infty).
        \end{align*}
        When $t<T^{\epsilon}$, we have 
        \begin{align*}
             & -\mathfrak{d}_t \underline V^{\epsilon}(t,x) - \mathcal H(t,x,\partial_x \underline V^{\epsilon}(t,x)) 
            \\
            &=
             \underbrace{-\mathfrak{d}_t \underline V^{\epsilon}(t,x) - \mathcal H(t,x - \epsilon B_t,\partial_x \underline V^{\epsilon}(t,x)) + \epsilon L_{\hat F} |B_t| e^{-rt}}_{=0}
             \\
             &\quad 
             +\underbrace{\mathcal H(t,x - \epsilon B_t,\partial_x \underline V^{\epsilon}(t,x)) 
             - \mathcal H(t,x,\partial_x \underline V^{\epsilon}(t,x)) 
              - \epsilon L_{\hat F} |B_t| e^{-rt} }_{\leq 0}
             \\
             &\le 0, \quad \text{for all }x\in\bR^+,\ \bP\otimes dt\text{-a.e. on }\Omega\times[0,T^{\epsilon}),
        \end{align*}
        where we used the fact that
        \[
        |\mathcal H(t,x,q)-\mathcal H(t,y,q)|
\le \frac14e^{-rt}L_{\hat F}|x-y|
\le e^{-rt}L_{\hat F}|x-y|, \quad \text{for all } (t,x,y,q)\in [0,\infty)\times\bR\times\bR\times\bR, \text{ a.s.}
\]
        Moreover, since $\check V(t,\cdot)\in C_0(\bR^+)$ and $Y_t^\epsilon\ge0$,
        \[
        \limsup_{x\to\infty}\bigl(\underline V^\epsilon(t,x)-\check V(t,x)\bigr)
        =-Y_t^\epsilon\le0.
        \]
        For $T\ge T^\epsilon$, $\underline V^\epsilon(T,x)=0$, and the uniform decay of $\check V$ yields
        \[
        \limsup_{T\to\infty}\sup_{x\in\bR^+}
        \bigl(\underline V^\epsilon(T,x)-\check V(T,x)\bigr)\le0.
        \]
        Hence Theorem~\ref{comparison} gives 
         \[
         	\underline V^{\epsilon}(t,x)\leq \check V(t,x) \qquad\text{for all } (t,x)\in [0,\infty)\times\bR^+ \quad\text{a.s.}
	\]
        Similarly, the upper barrier satisfies the required temporal asymptotic condition:
        \[
        \liminf_{T\to\infty}\inf_{x\in\bR^+}
        \bigl(\overline V^\epsilon(T,x)-\check V(T,x)\bigr)\ge0.
        \]
        For $t\ge T^\epsilon$, $\overline V^\epsilon(t,x)=\frac{1}{4r}e^{-rt}$, so
        \begin{equation*}
            -\mathfrak{d}_t \overline V^{\epsilon}(t,x) 
            - \mathcal H(t,x,\partial_x\overline V^{\epsilon}(t,x)) 
            =\frac{1}{4}e^{-rt} - \frac{1}{4}e^{-rt}\hat F(t,x) \ge 0, \quad \text{for all }x\in\bR^+,\ \bP\otimes dt\text{-a.e. on }\Omega\times[T^{\epsilon},\infty).
        \end{equation*}
        For $t<T^\epsilon$,
        \begin{align*}
            &-\mathfrak{d}_t \overline V^{\epsilon}(t,x)
            - \mathcal H(t,x,\partial_x\overline V^{\epsilon}(t,x))
            \\
            &= \underbrace{-\mathfrak{d}_t \overline V^{\epsilon}(t,x) - \mathcal H(t,x-\epsilon B_t,\partial_x\overline V^{\epsilon}(t,x)) -\epsilon L_{\hat F} |B_t| e^{-rt}}_{=0}
            \\ 
            &\quad 
            +
             \underbrace{
                 \mathcal H(t,x-\epsilon B_t,\partial_x\overline V^{\epsilon}(t,x))
             -\mathcal H(t,x,\partial_x\overline V^{\epsilon}(t,x)) 
             + \epsilon L_{\hat F} |B_t|e^{-rt}}_{\geq 0} 
            \\
            &\geq 0, \quad \text{for all }x\in\bR^+,\ \bP\otimes dt\text{-a.e. on }\Omega\times[0,T^{\epsilon}).
        \end{align*}
        Since $\check V(t,\cdot)\in C_0(\bR^+)$ and
        \[
        \lim_{x\to\infty}\overline V^\epsilon(t,x)
        =\frac{1}{4r}e^{-r(t\vee T^\epsilon)}+Y_t^\epsilon\ge0,
        \]
        we also have
        \[
        \liminf_{x\to\infty}\bigl(\overline V^\epsilon(t,x)-\check V(t,x)\bigr)\ge0.
        \]
        Theorem~\ref{comparison} therefore yields 
        \[
        	\overline V^{\epsilon}(t,x)\geq \check V(t,x) \qquad\text{for all } (t,x)\in [0,\infty)\times\bR^+ \quad\text{a.s.}
        \]
        
        \medskip
        
        \textbf{Step 3.} We estimate the gap between the two barriers. 
        By definition, for each $(t,x)\in[0,\infty)\times\bR^+$,
        \begin{align*}
             \bE|\overline V^{\epsilon}(t,x) - \underline V^{\epsilon} (t,x) | 
            \leq  \bE\left[ 2 |Y_t^{\epsilon}|  + \frac{1}{4r}e^{-rT^{\epsilon}}\right]
            &\leq \bE\left[ 2 \epsilon L_{\hat F} \int_0^{\infty}e^{-rs}|B_s|ds +  \frac{\epsilon}{4r} \right]  \\
            &\leq \left(\frac{2L_{\hat F}}{r^{3/2}} + \frac{1}{4r}\right)\epsilon  
            \\
            &\rightarrow 0, \quad \text{as } \epsilon\to 0^+.
        \end{align*}
        Both $\tilde V$ and $\check V$ lie between the same barriers, hence for each fixed $(t,x)$,
\[
\bE|\check V(t,x)-\tilde V(t,x)|
\le
\bE[\overline V^\epsilon(t,x)-\underline V^\epsilon(t,x)]
\longrightarrow 0.
\]
Thus $\tilde V=\check V$ a.s. on rational $(t,x)$, and pathwise continuity extends the equality to $\bR^+\times\bR^+$.
\end{proof}

\subsection{Markovian reference model}

The random survival function $\hat F$ generally leads to a non-Markovian stochastic HJ equation. Under the Markovian specification
\[ 
\hat F(\omega,t,x)=\hat f\bigl(t,x,Y_t(\omega)\bigr),
\]
for some sufficiently regular function $\hat f:[0,\infty)\times\bR\times\bR\to\bR$, where $Y$ is a diffusion satisfying
\begin{equation*} 
Y_s = y + \int_0^s\!\beta(\tau,Y_\tau)\,\ud\tau + \int_0^s\!\gamma(\tau,Y_\tau)\,\ud W_\tau,
\quad s\ge0.
\end{equation*}
To emphasize this distinction through the explicit $\omega$-dependence, the Markovian property can be recovered by taking the pair $(X,Y)$ as the state process in the control problem. Define the deterministic Hamiltonian
\[
\mathcal H^{\rm M}(t,x,y,p)
:=\max_{a\in[0,1]}
\left\{ap+e^{-rt}a(1-a)\hat f(t,x,y)\right\},
\]
where, for the approximating problems on $\bR$, $\hat f$ is understood with the same even extension in the $x$-variable as $\hat F$ above. In \eqref{upper-appr-U} and \eqref{lower-appr-U}, standard Markovian control theory indicates that
\[
\overline U^{\epsilon}(\omega,t,x)=\overline u^{\epsilon}(t,x,Y_t(\omega)),
\qquad
\underline U^{\epsilon}(\omega,t,x)=\underline u^{\epsilon}(t,x,Y_t(\omega)),
\]
and
\[
\mathfrak d_\omega\overline U^{\epsilon}(\omega,t,x)
=\gamma(t,Y_t(\omega))\partial_y\overline u^{\epsilon}(t,x,Y_t(\omega)),
\qquad
\mathfrak d_\omega\underline U^{\epsilon}(\omega,t,x)
=\gamma(t,Y_t(\omega))\partial_y\underline u^{\epsilon}(t,x,Y_t(\omega)).
\]
The deterministic functions $\overline u^\epsilon$ and $\underline u^\epsilon$ satisfy
\begin{align*}
-\partial_t \overline u^{\epsilon}
-\frac12\gamma^2(t,y)\partial_{yy}^2\overline u^{\epsilon}
-\beta(t,y)\partial_y\overline u^{\epsilon}
-\frac{\epsilon^2}{2}\partial_{xx}^2\overline u^{\epsilon}
-\mathcal H^{\rm M}(t,x,y,\partial_x\overline u^{\epsilon})&=0,\\
-\partial_t \underline u^{\epsilon}
-\frac12\gamma^2(t,y)\partial_{yy}^2\underline u^{\epsilon}
-\beta(t,y)\partial_y\underline u^{\epsilon}
-\frac{\epsilon^2}{2}\partial_{xx}^2\underline u^{\epsilon}
-\mathcal H^{\rm M}(t,x,y,\partial_x\underline u^{\epsilon})&=0,
\end{align*}
with terminal conditions
\[
\overline u^{\epsilon}(T^{\epsilon},x,y)=\frac{1}{4r}e^{-rT^{\epsilon}},
\qquad
\underline u^{\epsilon}(T^{\epsilon},x,y)=0.
\]

The original control problem is Markovian in $(X,Y)$ as well. Hence its value function has the representation
\[
\tilde V(\omega,t,x)=\tilde v(t,x,Y_t(\omega))
\]
for a deterministic function $\tilde v$. Moreover,
\[
\mathfrak d_\omega\tilde V(\omega,t,x)
=\gamma(t,Y_t(\omega))\partial_y\tilde v(t,x,Y_t(\omega)).
\]
The stochastic HJ equation \eqref{SHJB_lim0} therefore becomes
\begin{align*}
-\mathfrak d_t\tilde V(\omega,t,x)
-\mathcal H^{\rm M}\bigl(t,x,Y_t(\omega),\partial_x\tilde v(t,x,Y_t(\omega))\bigr)=0,
\end{align*}
or, equivalently,
\begin{align*}
-d\tilde v(t,x,Y_t)
=\mathcal H^{\rm M}(t,x,Y_t,\partial_x\tilde v(t,x,Y_t))\,dt
-\gamma(t,Y_t)\partial_y\tilde v(t,x,Y_t)\,dW_t.
\end{align*}
By It\^o's formula, $\tilde v$ solves the deterministic PDE
\begin{align*}
-\partial_t\tilde v(t,x,y)
-\frac12\gamma^2(t,y)\partial_{yy}^2\tilde v(t,x,y)
-\beta(t,y)\partial_y\tilde v(t,x,y)
-\mathcal H^{\rm M}(t,x,y,\partial_x\tilde v(t,x,y))=0,
\end{align*}
for $(t,x,y)\in\bR^+\times\bR^+\times\bR$. In particular,
\begin{align*}
\mathfrak d_t\tilde V(\omega,t,x)
&=\partial_t\tilde v(t,x,Y_t(\omega))
+\beta(t,Y_t(\omega))\partial_y\tilde v(t,x,Y_t(\omega))\\
&\quad+\frac12\gamma^2(t,Y_t(\omega))\partial_{yy}^2\tilde v(t,x,Y_t(\omega)).
\end{align*}

\section{Asymptotic analysis with deterministic controls}\label{subsection_special_case}


We now analyze the long-run behavior of the value function and optimal extraction policy through deterministic benchmark problems. Although the underlying stochastic HJ equation with random, path-dependent coefficients is not explicitly solvable, its asymptotic behavior admits a tractable pathwise characterization. Economically, this identifies residual uncertainty about large reserve levels—and, in particular, the tail hazard rate of the limiting reserve distribution—as a key determinant of long-run extraction.

\subsection{Deterministic control problem}
\label{subsec:well-posedness}

We first consider the benchmark case in which the survival function is deterministic and time-independent. This model will also be used in the asymptotic analysis below.

\begin{thm}\label{thm_deterministic}
Under Assumption~\ref{assumption}, suppose that
\[
\hat F(\omega,t,x)\equiv \tilde F(x)
\]
for some deterministic function $\tilde F$ on $\bR^+$. Then:
\begin{enumerate}
\item[(i)] The unique viscosity solution $\tilde V$ of \eqref{SHJB_lim0} is classical and has the form
\[
\tilde V(t,x)=e^{-rt}\hat V(x),
\]
where $\hat V$ is the classical solution of
\begin{equation}\label{HJ-eq-equiv}
r\hat V(x)=\frac{\left|\left(\partial_x\hat V(x)+\tilde F(x)\right)^+\right|^2}{4\tilde F(x)},
\qquad x\in\bR^+,
\qquad \lim_{x\to\infty}\hat V(x)=0.
\end{equation}

\item[(ii)] The function $\hat V$ is the value function of the deterministic control problem
\begin{equation}\label{control_prob_deterministic}
\hat V(x)=\max_{\alpha\in\cA_d}\int_0^\infty e^{-rs}\alpha_s(1-\alpha_s)
\tilde F(X_s^{0,x;\alpha})\,\ud s,
\qquad x\in\bR^+,
\end{equation}
where $\cA_d$ denotes the deterministic $[0,1]$-valued controls. The optimal feedback is
\begin{equation}\label{optimal_feedback_deterministic}
a^*(x)=\frac{\left(\partial_x\hat V(x)+\tilde F(x)\right)^+}{2\tilde F(x)}
=\sqrt{\frac{r\hat V(x)}{\tilde F(x)}},
\qquad x\in\bR^+.
\end{equation}
For initial state $x$, the corresponding optimal control satisfies
\[
\alpha_s^{*,x}=a^*(X_s^{0,x;\alpha^{*,x}}).
\]
\end{enumerate}
\end{thm}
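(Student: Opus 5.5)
The plan is to work with the deterministic problem \eqref{control_prob_deterministic} directly, build a classical solution of \eqref{HJ-eq-equiv}, and then transfer everything to the stochastic equation through Corollary~\ref{corollary_classical_shjb}.

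\textbf{Step 1: elementary properties of $\hat V$.} Define $\hat V$ as the value function of \eqref{control_prob_deterministic}. Time-homogeneity, and the fact that $\hat F$ is deterministic and time-independent, give
\[
\tilde V(t,x)=e^{-rt}\hat V(x).
\]
The reason is that the supremum over progressively measurable controls equals the supremum over deterministic ones: for each $\omega$ the integrand is a deterministic functional of the path $\alpha(\omega)$, so randomization cannot help. Since $\tilde F$ is nonincreasing and $L_{\hat F}$-Lipschitz, $\hat V$ is nonincreasing and Lipschitz (by Proposition~\ref{proposition}(iv)) and bounded by $1/(4r)$. Moreover $\hat V(x)\to0$ as $x\to\infty$: indeed $\hat V(x)\le \tilde F(x)/(4r)$, and $\tilde F(x)\to0$ because $\bE[\overline X]<\infty$.

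\textbf{Step 2: classical regularity of $\hat V$.} Standard deterministic dynamic programming shows that $\hat V$ is a viscosity solution of the stationary equation
\[
r\hat V=\max_a\{a\hat V'+a(1-a)\tilde F\}.
\]
I would prove that $\hat V$ is in fact $C^1$ by solving the equation for $\hat V'$. On $\{\hat V>0\}$ (which is everything, since $\tilde F>0$ allows positive revenue) the equation reads
\[
\big(\hat V'+\tilde F\big)^+=2\sqrt{r\hat V\tilde F}>0.
\]
Hence $\hat V'=2\sqrt{r\hat V\tilde F}-\tilde F$, a first-order ODE with a continuous right-hand side. The concave-Hamiltonian structure works here: the Hamiltonian is convex in $p$ and strictly increasing where positive, so the quadratic branch is selected uniquely. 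Since $\hat V$ is Lipschitz, it is differentiable a.e. At kinks the sub- and superdifferentials would have to contain two distinct values $p$ with the same $H(p)=r\hat V>0$, which strict monotonicity of $p\mapsto (p+\tilde F)^{+2}/(4\tilde F)$ on the region where it is positive forbids. Convexity rules out concave-corner issues, so a Lipschitz viscosity solution is differentiable everywhere and $\hat V'$ is continuous by the ODE. I expect this to be the main obstacle, together with uniqueness for the boundary problem with the condition at infinity. Solutions of the ODE with $\lim_{x\to\infty}\hat V=0$ might not be unique: $\hat V\equiv0$ is excluded by positivity but must be handled. I would identify $\hat V$ as the maximal, or the unique positive, solution via a verification argument rather than ODE uniqueness.

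\textbf{Step 3: verification and the optimal feedback.} Take any classical solution $\hat V$ of \eqref{HJ-eq-equiv} with $\hat V>0$. The chain rule along $X^{0,x;\alpha}$ gives
\[
\frac{d}{ds}\big(e^{-rs}\hat V(X_s)\big)\le -e^{-rs}\alpha_s(1-\alpha_s)\tilde F(X_s),
\]
with equality for the feedback $a^*$. Integrating and using boundedness of $\hat V$ yields optimality. The feedback ODE $\dot X=a^*(X)$ is solvable: $a^*=\sqrt{r\hat V/\tilde F}$ is continuous with values in $[0,1]$ (the value $\le1$ follows from $(\hat V'+\tilde F)^+\le 2\tilde F$, since $\hat V'\le0$), and Peano's theorem gives a solution. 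The two expressions for $a^*$ agree by \eqref{HJ-eq-equiv}.

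\textbf{Step 4: back to the stochastic equation.} Set $\phi(t,x)=e^{-rt}\hat V(x)$. This is deterministic, so $\mathfrak d_\omega\phi=0$ and $\mathfrak d_t\phi=-re^{-rt}\hat V$. Then $\phi\in\mathcal C_{\sF}\cap\mathcal S^2(C_0(\bR^+))$: the Hölder condition in Definition~\ref{cf1}(ii) follows because $\hat V'$ is locally Lipschitz on compacts, by the ODE and the Lipschitz continuity of $\tilde F$, $\hat V$ and $\sqrt{\cdot}$ away from zero. Integrability of $\partial_x\phi$ in $\mathcal L^2(C_0)$ follows from the $e^{-rt}$ factor together with $\hat V'\to0$ at infinity, which holds since $\tilde F\to0$ and $\hat V\to0$. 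Finally, $\phi$ solves \eqref{SHJB_lim0} classically, because
\[
\mathcal H(t,x,e^{-rt}p)=e^{-rt}\frac{\big((p+\tilde F)^+\big)^2}{4\tilde F}.
\]
Corollary~\ref{corollary_classical_shjb} then gives $\phi=\tilde V$ and uniqueness, proving (i). Part (ii) follows from Step 3.
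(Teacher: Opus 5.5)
Your argument is correct in substance, but it runs in the opposite direction from the paper's. The paper does not use the value function until the end. It solves the ODE \eqref{HJ-eq-backward} on truncated intervals $[0,T]$ with $\psi_T(T)=0$, uses the barriers $0$ and $\tilde F/(4r)$ to obtain $0\le\psi_T\le\tilde F/(4r)$ and $-\tilde F\le\psi_T'\le0$, and passes to $T\to\infty$ by Arzel\`a--Ascoli. It then identifies $e^{-rt}\psi$ with $\tilde V$ through Corollary~\ref{corollary_classical_shjb}, and with the deterministic value function through verification.

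You start instead from the value function of \eqref{control_prob_deterministic}. You get $\tilde V=e^{-rt}\hat V$ directly, by pathwise domination together with admissibility of deterministic controls. You read off $0<\hat V\le\tilde F/(4r)$, monotonicity and Lipschitz continuity from the control problem, and you extract $C^1$ regularity from the stationary viscosity equation. Your route needs no compactness and yields existence and the control interpretation in one stroke. The paper's route is pure ODE analysis and needs no viscosity theory for the stationary equation.

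Two points should be tightened.

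First, your kink discussion is imprecise. At a convex corner the superdifferential is empty, and the supersolution inequality gives only $H(x,p)\le r\hat V(x)$ on the subdifferential, not equality. The equality holds for limits of derivatives at nearby differentiability points, not for arbitrary sub- or superdifferential elements. You can avoid corners entirely:
\begin{itemize}
\item $\hat V$ is Lipschitz, hence absolutely continuous.
\item For a.e.\ $x$, it is differentiable with $\hat V'(x)\le0$ and $r\hat V(x)=H(x,\hat V'(x))$.
\item Since $r\hat V>0$ and $p\mapsto((p+\tilde F)^+)^2/(4\tilde F)$ is strictly increasing on $(-\tilde F,\tilde F]$, this forces $\hat V'=g:=2\sqrt{r\hat V\tilde F}-\tilde F$ a.e.
\item As $g$ is continuous, $\hat V(x)=\hat V(0)+\int_0^x g$ is $C^1$.
\end{itemize}

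Second, on uniqueness for \eqref{HJ-eq-equiv}. Your worry about $\hat V\equiv0$ is moot, since it would force $0=\tilde F/4$. The real issue is in your Step 3: for an arbitrary classical solution $\psi$, the bound $a^*\le1$ cannot be taken from monotonicity, which you only know for the value function. It follows instead from forward invariance. Any solution is positive, because at a zero one would have $\psi'\le-\tilde F<0$. Where $\psi>\tilde F/(4r)$ one has $\psi'>0$ while $\tilde F$ is nonincreasing, so $\psi$ could not tend to $0$. Hence $\psi\le\tilde F/(4r)$ and $\psi'\le0$. Alternatively, for two solutions, $(\psi_1-\psi_2)'=2\sqrt{r\tilde F}(\sqrt{\psi_1}-\sqrt{\psi_2})$ has the sign of $\psi_1-\psi_2$, so a nonzero difference cannot vanish at infinity. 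The paper's own proof leaves this uniqueness point implicit as well.
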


\begin{proof}
\textbf{Step 1.} We first solve the boundary-value problem
\begin{equation}\label{HJ-eq-backward}
\psi'(x)=\sqrt{4r\psi(x)\tilde F(x)}-\tilde F(x),
\qquad x\ge0,
\qquad \lim_{x\to\infty}\psi(x)=0.
\end{equation}
For $T>0$, consider
\[
\psi_T'(x)=2\sqrt{r\tilde F(x)(\psi_T(x))^+}-\tilde F(x),
\qquad 0\le x\le T,
\qquad \psi_T(T)=0.
\]
By Peano's theorem, a local solution exists near $T$. To obtain the required bounds, set $u_T(s):=\psi_T(T-s)$ and $G_T(s):=\tilde F(T-s)$. Then $G_T$ is non-decreasing and
\[
u_T'(s)=G_T(s)-2\sqrt{rG_T(s)(u_T(s))^+},
\qquad u_T(0)=0.
\]
At $u_T=0$ the drift is strictly positive, while at $u_T=G_T/(4r)$ it vanishes and the upper barrier $G_T/(4r)$ is non-decreasing. Hence
\[
0\le u_T(s)\le\frac{G_T(s)}{4r},
\]
or equivalently
\[
0\le \psi_T(x)\le \frac{\tilde F(x)}{4r},
\qquad
-\tilde F(x)\le\psi_T'(x)\le0.
\]
These bounds prevent finite-time blow-up, so $\psi_T$ extends to $[0,T]$. The family $\{\psi_T\}_{T>0}$ is therefore uniformly bounded and equicontinuous on compact intervals. By Arzel\`a--Ascoli and a diagonal argument, along some $T_n\uparrow\infty$,
\[
\psi_{T_n}\to\psi
\]
locally uniformly, where
\[
\psi(x)=\psi(0)+\int_0^x\left(2\sqrt{r\tilde F(s)\psi(s)}-\tilde F(s)\right)\ud s.
\]
Thus $\psi\in C^1([0,\infty))$ solves \eqref{HJ-eq-backward} and satisfies
\[
0\le\psi(x)\le\frac{\tilde F(x)}{4r}.
\]
Since $\tilde F(x)\to0$, we have $\psi(x)\to0$. Moreover, $\tilde F>0$ implies $\psi>0$ on finite intervals. As $\psi$ and $\tilde F$ are locally bounded away from zero and locally Lipschitz, the ODE implies that $\psi'$ is locally Lipschitz.

\medskip
\textbf{Step 2.} Passing the derivative bounds to the limit gives
$-\tilde F\le\psi'\le0$, so $\psi$ solves \eqref{HJ-eq-equiv}. Set
\[
\tilde V(t,x)=e^{-rt}\psi(x).
\]
For every $\alpha\in[0,1]$, completing the square yields
\begin{align*}
&\alpha\partial_x\tilde V(t,x)
+\alpha(1-\alpha)e^{-rt}\tilde F(x)\\
&\qquad=
-e^{-rt}\tilde F(x)
\left|\alpha-\frac{\left(\psi'(x)+\tilde F(x)\right)^+}{2\tilde F(x)}\right|^2
+e^{-rt}\frac{\left|\left(\psi'(x)+\tilde F(x)\right)^+\right|^2}{4\tilde F(x)}.
\end{align*}
Taking the supremum over $\alpha\in[0,1]$ and using \eqref{HJ-eq-equiv},
\[
-\mathfrak d_t\tilde V(t,x)
=-\partial_t\tilde V(t,x)
=\mathcal H(t,x,\partial_x\tilde V(t,x)).
\]
The preceding bounds and local regularity show that $\tilde V$ belongs to the class of Definition~\ref{defn-classical-solution}. Corollary~\ref{corollary_classical_shjb} therefore identifies it with the unique viscosity solution, proving (i).

For (ii), define
\[
a^*(x)=\sqrt{r\psi(x)/\tilde F(x)}.
\]
By \eqref{HJ-eq-backward}, this agrees with \eqref{optimal_feedback_deterministic}, and $0<a^*(x)\le1/2$. Since $a^*$ is locally Lipschitz, the closed-loop equation
\[
\dot X_s=a^*(X_s),\qquad X_0=x,
\]
has a unique global solution. Hence the feedback control belongs to $\cA_d$. For any $\alpha\in\cA_d$,
\begin{align*}
\int_0^\infty e^{-rs}\alpha_s(1-\alpha_s)\tilde F(X_s^{0,x;\alpha})\,\ud s
&\le \int_0^\infty e^{-rs}
\left[r\psi(X_s^{0,x;\alpha})-\alpha_s\psi'(X_s^{0,x;\alpha})\right]\ud s\\
&=\psi(x)-\lim_{t\to\infty}e^{-rt}\psi(X_t^{0,x;\alpha})
=\psi(x).
\end{align*}
Equality holds for the feedback control $\alpha_s=a^*(X_s^{0,x;\alpha})$ a.e., which proves (ii).
\end{proof}

\begin{cor}\label{corollary_deterministic_classical}
Under the assumptions of Theorem~\ref{thm_deterministic}, the HJ equation \eqref{HJ-eq-equiv} and the ODE \eqref{HJ-eq-backward} have the same unique classical solution $\hat V$, satisfying
\[
0<\hat V(x)<\frac{\tilde F(x)}{4r},
\qquad
-\tilde F(x)<\partial_x\hat V(x)<0,
\qquad x\in\bR^+.
\]
\end{cor}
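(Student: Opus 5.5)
The plan is to obtain uniqueness from the classical uniqueness corner of the previous theorem, and then to upgrade the non-strict bounds from Step 1 of that proof to strict ones using the ODE structure and the strict positivity of $\tilde F$.

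\textbf{Uniqueness and coincidence.} Let $\psi$ be any classical solution of \eqref{HJ-eq-backward}, or of \eqref{HJ-eq-equiv} with $\psi(x)\to0$. Then $e^{-rt}\psi(x)$ is a classical solution of \eqref{SHJB_lim0} in the sense of Definition~\ref{defn-classical-solution}. The time derivative is computed as in Step~2 of Theorem~\ref{thm_deterministic}. The boundary and integrability requirements follow from $\psi\to0$ together with the factor $e^{-rt}$.

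Corollary~\ref{corollary_classical_shjb} then identifies $e^{-rt}\psi$ with $\tilde V=e^{-rt}\hat V$, so $\psi=\hat V$. For \eqref{HJ-eq-equiv}, I first need $\psi$ to be bounded, so that $e^{-rt}\psi\in\mathcal S^2(C_0)$. This holds since $\psi$ is continuous and tends to $0$.

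The two equations are equivalent on solutions with $\psi'+\tilde F\ge0$. In that case $r\psi=(\psi'+\tilde F)^2/(4\tilde F)$ gives $\psi'=\sqrt{4r\psi\tilde F}-\tilde F$. The converse direction is immediate, because the right-hand side of \eqref{HJ-eq-backward} is at least $-\tilde F$.

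The one point needing care is a solution of \eqref{HJ-eq-equiv} on the region where $\psi'+\tilde F<0$. There $r\psi=0$, hence $\psi=0$ on an interval, which forces $\psi'=0>-\tilde F$, a contradiction. So the branch cannot occur.

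\textbf{Strict bounds.} The non-strict bounds $0\le\hat V\le\tilde F/(4r)$ and $-\tilde F\le\hat V'\le0$ are already known. For each strict version:

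\begin{itemize}
\item[(1)] \emph{$\hat V>0$.} This follows directly from the value-function representation \eqref{control_prob_deterministic}: the constant control $\alpha\equiv1/2$ gives a positive integral, since $\tilde F>0$.

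\item[(2)] \emph{$\hat V'>-\tilde F$.} This is equivalent to $\sqrt{4r\hat V\tilde F}>0$, which holds by (1).

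\item[(3)] \emph{$\hat V<\tilde F/(4r)$.} Equality $\hat V(x)=\tilde F(x)/(4r)$ would give $\hat V'(x)=0$ through the ODE. I would instead use the control representation. Any control obtains at most $\int e^{-rs}\tfrac14\tilde F(X_s)\,ds$. If this bound were attained, then $\alpha=1/2$ a.e. and $\tilde F(X_s)=\tilde F(x)$ for all $s$. But $X_s\to\infty$ and $\tilde F\to0$ while $\tilde F(x)>0$, so $\tilde F(X_s)<\tilde F(x)$ on a set of positive measure. Since the maximum is attained by the optimal feedback, this gives strict inequality.

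\item[(4)] \emph{$\hat V'<0$.} This is equivalent to $4r\hat V<\tilde F$, which is (3).
\end{itemize}

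\textbf{Main obstacle.} The main difficulty is step (3). One must make sure that the maximum is attained, which holds by Theorem~\ref{thm_deterministic}(ii). One must also make sure that near-equality cannot arise along the optimal trajectory. I would handle this by writing the value as $\int e^{-rs}\alpha(1-\alpha)\tilde F(X_s)\,ds$ with $\alpha(1-\alpha)\le1/4$. Monotonicity gives $\tilde F(X_s)\le\tilde F(x)$, and this inequality is strict for large $s$ because $X_s\ge x+\int_0^s a^*$ grows without bound, as $a^*>0$.
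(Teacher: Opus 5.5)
\textbf{Gap in the uniqueness step.} Your strict bounds are correct, but the uniqueness step has a hole. You take an arbitrary classical solution $\psi$ vanishing at infinity and assert that $e^{-rt}\psi$ solves \eqref{SHJB_lim0}, with the time derivative ``computed as in Step~2 of Theorem~\ref{thm_deterministic}''. That computation identifies $\max_{a\in[0,1]}\{a\psi'+a(1-a)\tilde F\}$ with $((\psi'+\tilde F)^+)^2/(4\tilde F)$. This is valid only when the unconstrained maximizer $(\psi'+\tilde F)/(2\tilde F)$ lies in $[0,1]$, i.e.\ when $-\tilde F\le\psi'\le\tilde F$. In the theorem this came from the bounds $-\tilde F\le\psi'\le0$, which were proved only for the constructed solution. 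For an arbitrary solution your branch argument gives only $\psi'\ge-\tilde F$. At a point where $\psi'>\tilde F$ the maximum is at $a=1$, so
$\cH(t,x,e^{-rt}\psi')=e^{-rt}\psi'<e^{-rt}(\psi'+\tilde F)^2/(4\tilde F)=re^{-rt}\psi=-\partial_t(e^{-rt}\psi)$.
Hence $e^{-rt}\psi$ is not a classical solution there, and Corollary~\ref{corollary_classical_shjb} does not apply. Comparison would only give $\psi\ge\hat V$.

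\textbf{How to close it.} You need an a priori bound valid for every solution that vanishes at infinity, and it is short. Put $D:=\psi-\tilde F/(4r)$, which is absolutely continuous. On $\{D>0\}$ the ODE gives $\psi'=\tilde F\bigl(2\sqrt{r\psi/\tilde F}-1\bigr)>0$, and $\tilde F'\le0$ a.e., so $D'>0$ a.e.\ there. Hence $D(x_0)>0$ would force $D>0$ and $\psi$ increasing on all of $[x_0,\infty)$, so $\psi\ge\psi(x_0)>0$ there, contradicting $\psi\to0$. Therefore $\psi\le\tilde F/(4r)$, the ODE then gives $\psi'\le0$, and Step~2 goes through verbatim.

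\textbf{A smaller omission.} Membership $e^{-rt}\psi\in\mathcal C_{\sF}$ does not follow from $\psi\to0$ and the factor $e^{-rt}$ alone. You also need $\psi'(x)\to0$, for $\partial_x\phi\in\mathcal L^2(C_0(\bR^+))$, and local H\"older continuity of $\psi'$, for Definition~\ref{cf1}(ii). Both come from the ODE: $|\psi'|\le2\sqrt{r\psi\tilde F}+\tilde F$, and $\sqrt{\psi\tilde F}$ is locally $\tfrac12$-H\"older, so you can take $\beta=\tfrac12$ and $p>2$.

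\textbf{The rest.} With these additions the proposal is complete. Your strict bounds via the control representation are more explicit than the paper. You use the test control $\alpha\equiv1/2$ for $\hat V>0$, and attainment by the feedback together with $\tilde F(X_s)<\tilde F(x)$ for large $s$ for $\hat V<\tilde F/(4r)$. The paper instead reads the corollary off the construction in Theorem~\ref{thm_deterministic}, where only the non-strict bounds and positivity are recorded.
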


\subsection{Asymptotic behavior of the value function and the optimal strategy}

We now study the case in which uncertainty about the resource level persists in the long run. For each fixed $x\in\bR^+$, the martingale convergence theorem gives
\[
\hat F(t,x)=\bE\!\left[1_{\{\overline X>x\}}\mid\sF_t\right]
\longrightarrow
\bE\!\left[1_{\{\overline X>x\}}\mid\sF_\infty\right]
=:\overline F(x).
\]
To obtain a sufficiently regular limiting problem, we impose the following stronger uniform convergence assumption.

\begin{ass}\label{assumption_asym}
There exists a jointly measurable function
\[
\overline F:(\Omega\times\bR^+,\sF_\infty\otimes\cB(\bR^+))
\to((0,\infty),\cB((0,\infty)))
\]
and a constant $L_F \geq 0$ such that
\begin{align*}
\lim_{t\to\infty}\bE\left[\sup_{x\in\bR^+}|\hat F(t,x)-\overline F(x)|\right]&=0,\\
|\overline F(x)-\overline F(y)|&\le L_{\overline F}|x-y|,
\qquad x,y\in\bR^+.
\end{align*}
\end{ass}

\begin{rmk}
Since $0\le\hat F\le1$ and $\overline F>0$ by assumption, the first condition implies, after passing to an a.s.~uniformly convergent subsequence, that $0<\overline F\le1$ and that $\overline F(\omega,\cdot)$ is non-increasing and vanishes at infinity for a.e. $\omega$.
\end{rmk}

The Lipschitz requirement excludes the fully revealing limit $\overline F(x)=1_{\{\overline X>x\}}$ and therefore describes a setting with persistent long-run uncertainty.

\subsubsection{Asymptotics of the value function}

The following proposition identifies the long-run limit of the rescaled value function.

\begin{prop}\label{prop_asym}
Under Assumptions \ref{assumption} and \ref{assumption_asym},
\begin{equation}\label{value-asymptotic-limit}
e^{rt}\tilde V(t,x)\longrightarrow\hat V(x)
\quad\text{in }L^1(\Omega),
\qquad x\in\bR^+,
\end{equation}
where, for almost every $\omega$, $\hat V(\omega,\cdot)$ is the unique classical solution of
\begin{equation}\label{HJ-eq-asymptotic}
r\hat V(x)=
\frac{\left|\left(\partial_x\hat V(x)+\overline F(x)\right)^+\right|^2}{4\overline F(x)},
\qquad x\in\bR^+,
\qquad \lim_{x\to\infty}\hat V(x)=0.
\end{equation}
\end{prop}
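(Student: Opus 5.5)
The plan is to compare $e^{rt}\tilde V(t,x)$ with the conditional expectation $\bE_{\sF_t}[\hat V(x)]$, where $\hat V(\omega,\cdot)$ is the deterministic value function built pathwise from $\overline F(\omega,\cdot)$, and to control the difference by the uniform error
\[
G_s:=\sup_{y\in\bR^+}|\hat F(s,y)-\overline F(y)|.
\]

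\textbf{Step 1: the pathwise limiting problem.} For a.e.~$\omega$, the function $\overline F(\omega,\cdot)$ is strictly positive, Lipschitz, non-increasing and vanishes at infinity (see the remark after Assumption~\ref{assumption_asym}). These are exactly the ingredients used in the proof of Theorem~\ref{thm_deterministic} and Corollary~\ref{corollary_deterministic_classical}. Applying them pathwise with $\tilde F=\overline F(\omega,\cdot)$ gives:
\begin{itemize}
\item a unique classical solution $\hat V(\omega,\cdot)$ of \eqref{HJ-eq-asymptotic};
\item the bound $0<\hat V\le 1/(4r)$;
\item the representation \eqref{control_prob_deterministic} with $\overline F$ in place of $\tilde F$.
\end{itemize}
Joint measurability of $\hat V$, hence $\sF_\infty$-measurability of $\hat V(x)$, follows from this representation: the value can be written as a supremum over a countable dense family of deterministic controls.

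From the representation I also extract a stability estimate valid for any two admissible survival profiles $F,G$:
\[
|\hat V_F(x)-\hat V_G(x)|\le \tfrac1{4r}\sup_y|F(y)-G(y)|.
\]
This holds because $a(1-a)\le 1/4$ and both problems are maximized over the same class of deterministic controls.

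\textbf{Step 2: a two-sided comparison.} Because the time shift $s\mapsto s-t$ leaves the problem invariant,
\[
e^{rt}\tilde V(t,x)=\esssup_{\alpha\in\cA}\bE_{\sF_t}\Big[\int_t^\infty e^{-r(s-t)}\alpha_s(1-\alpha_s)\hat F(s,X^{t,x;\alpha}_s)\,ds\Big].
\]
Replacing $\hat F$ by $\overline F$ changes each reward by at most $\frac14\int_t^\infty e^{-r(s-t)}G_s\,ds$.

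\emph{Upper bound.} For each fixed $\omega$, the path $s\mapsto\alpha_s(\omega)$ is a deterministic control. Its $\overline F$-reward is therefore at most $\hat V(\omega,x)$. Taking conditional expectations gives
\[
e^{rt}\tilde V(t,x)\le \bE_{\sF_t}\Big[\hat V(x)+\tfrac14\int_t^\infty e^{-r(s-t)}G_s\,ds\Big].
\]

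\emph{Lower bound.} This is the step I expect to be the main obstacle. The optimal control for $\overline F$ is only $\sF_\infty$-measurable and hence not admissible. Instead I will use the deterministic optimal feedback control $\alpha^{t}$ for the frozen profile $\hat F(t,\cdot)$. This profile is $\sF_t$-measurable and again positive, Lipschitz, non-increasing and vanishing at infinity, so Theorem~\ref{thm_deterministic}(ii) applies to it.

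The closed-loop ODE solution depends measurably on $\omega$ through $\hat F(t,\cdot)$. The resulting path is therefore $\sF_t$-measurable and belongs to $\cA$ on $[t,\infty)$. Its $\overline F$-reward is at least
\[
\hat V_{\hat F(t,\cdot)}(x)-\tfrac1{4r}G_t\ \ge\ \hat V(x)-\tfrac1{2r}G_t,
\]
where the last inequality uses the stability estimate from Step~1. Combining this with the $\hat F$ versus $\overline F$ replacement error yields
\[
\big|e^{rt}\tilde V(t,x)-\bE_{\sF_t}[\hat V(x)]\big|\le \bE_{\sF_t}\Big[\tfrac1{2r}G_t+\tfrac14\int_t^\infty e^{-r(s-t)}G_s\,ds\Big].
\]

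\textbf{Step 3: passage to the limit.} Taking expectations, the right-hand side is at most
\[
\tfrac1{2r}\bE[G_t]+\tfrac14\int_0^\infty e^{-ru}\,\bE[G_{t+u}]\,du .
\]
By Assumption~\ref{assumption_asym}, $\bE[G_s]\to0$ as $s\to\infty$. Since also $G\le 1$, dominated convergence shows that the whole bound tends to $0$ as $t\to\infty$.

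Finally, $\hat V(x)$ is bounded and $\sF_\infty$-measurable. Lévy's martingale convergence theorem therefore gives $\bE_{\sF_t}[\hat V(x)]\to\hat V(x)$ in $L^1(\Omega)$. The triangle inequality then yields \eqref{value-asymptotic-limit}.
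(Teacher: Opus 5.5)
Your proposal is correct and follows essentially the same route as the paper. It identifies $\hat V$ pathwise via Theorem~\ref{thm_deterministic} with countable-control measurability, uses the $\frac{1}{4r}\sup_y|F(y)-G(y)|$ stability estimate, gets the upper bound by reading admissible controls pathwise, gets a lower bound from a control built on the $\sF_t$-measurable frozen profile $\hat F(t,\cdot)$ with error $\frac{1}{2r}G_t$, and concludes with martingale convergence of $\bE_{\sF_t}[\hat V(x)]$. The only difference is that you use the exact closed-loop optimal feedback for $\hat F(t,\cdot)$, which obliges you to prove the $\sF_t$-measurability you only assert. It does hold: the feedback $\sqrt{r\,v(\hat F(t,\cdot),x)/\hat F(t,x)}$ is measurable through the countable-supremum representation and is locally Lipschitz, so the closed-loop path is a limit of measurable Euler approximations. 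The paper avoids this step by choosing an $\varepsilon$-optimal control from the countable dense family via the $\sF_t$-measurable index $N^{t,\varepsilon}$.
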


\begin{proof}
\textbf{Step 1.} We first identify $\hat V$ as the pathwise value function associated with $\overline F$ and verify measurability. For any bounded nonnegative function $f$, initial state $x$, and deterministic control $a:[0,\infty)\to[0,1]$, define
\[
\Gamma(f,x;a)
:=\int_0^\infty e^{-ru}a_u(1-a_u)
 f\left(x+\int_0^u a_v\,\ud v\right)\ud u,
\qquad
v(f,x):=\sup_a\Gamma(f,x;a).
\]
For bounded nonnegative $f,g$,
\begin{equation}\label{value-coefficient-stability}
|\Gamma(f,x;a)-\Gamma(g,x;a)|\le\frac{\|f-g\|_\infty}{4r},
\qquad
|v(f,x)-v(g,x)|\le\frac{\|f-g\|_\infty}{4r}.
\end{equation}
Applying Theorem~\ref{thm_deterministic} pathwise with $\tilde F=\overline F(\omega,\cdot)$ gives
\[
\hat V(\omega,x):=v(\overline F(\omega,\cdot),x),
\]
which is the unique classical solution of \eqref{HJ-eq-asymptotic}; moreover $0\le\hat V\le1/(4r)$. To verify $\sF_\infty$-measurability, let
\[
\mathbb U:=\left\{a\in L^1(\bR^+,e^{-ru}\,\ud u):0\le a\le1\ \text{a.e.}\right\},
\qquad
\|a\|_r:=\int_0^\infty e^{-ru}|a_u|\,\ud u.
\]
This is a Polish space. If $f$ is $L$-Lipschitz and $0\le f\le1$, then for $a,b\in\mathbb U$,
\begin{align*}
|\Gamma(f,x;a)-\Gamma(f,x;b)|
&\le \int_0^\infty e^{-ru}|a_u-b_u|\,\ud u
+\frac L4\int_0^\infty e^{-ru}\int_0^u|a_v-b_v|\,\ud v\,\ud u\notag\\
&=\left(1+\frac{L}{4r}\right)\|a-b\|_r.
\end{align*}
Hence $a\mapsto\Gamma(f,x;a)$ is continuous. Choosing a countable dense family $(a^n)_{n\ge1}\subset\mathbb U$ of rational-valued step functions with rational breakpoints and compact support,
\begin{equation}\label{countable-control-reduction}
v(f,x)=\sup_{n\ge1}\Gamma(f,x;a^n).
\end{equation}
Joint measurability of $\overline F$ therefore implies that $\hat V(\cdot,x)$ is $\sF_\infty$-measurable for every $x$.

\medskip
\textbf{Step 2.} We compare the control problems associated with $\hat F$ and $\overline F$. After the change of variables $s=t+u$,
\begin{equation*}
e^{rt}\tilde V(t,x)
=\esssup_{\alpha\in\cA}
\bE_{\sF_t}\!\left[
\int_0^\infty e^{-ru}\alpha_{t+u}(1-\alpha_{t+u})
\hat F(t+u,X_{t+u}^{t,x;\alpha})\,\ud u\right].
\end{equation*}
Set
\[
\Phi_t(x):=\esssup_{\alpha\in\cA}
\bE_{\sF_t}\!\left[
\int_0^\infty e^{-ru}\alpha_{t+u}(1-\alpha_{t+u})
\overline F(X_{t+u}^{t,x;\alpha})\,\ud u\right],
\]
and
\[
\delta_t:=\sup_{y\in\bR^+}|\hat F(t,y)-\overline F(y)|\le1.
\]
Then
\begin{equation}\label{first-comparison}
\bE\big[|e^{rt}\tilde V(t,x)-\Phi_t(x)|\big]
\le\frac14\int_0^\infty e^{-ru}\bE[\delta_{t+u}]\,\ud u
\longrightarrow0
\end{equation}
by dominated convergence. It therefore remains to show that $\Phi_t(x)\to\hat V(x)$ in $L^1$. For every admissible control, its realized time path is admissible in the definition of $v(\overline F,x)$, hence
\begin{equation}\label{full-information-upper-bound}
\Phi_t(x)\le\bE_{\sF_t}[\hat V(x)].
\end{equation}
For the converse, fix $\varepsilon>0$. With the dense family from \eqref{countable-control-reduction}, define
\[
Z_n^t:=\Gamma(\hat F(t,\cdot),x;a^n),
\qquad
Z^t:=\sup_{n\ge1}Z_n^t=v(\hat F(t,\cdot),x),
\]
and
\[
N^{t,\varepsilon}:=
\min\{n\ge1:Z_n^t\ge Z^t-\varepsilon\}.
\]
Since each $Z_n^t$ is $\sF_t$-measurable, so is $N^{t,\varepsilon}$. Thus
\[
a^{t,\varepsilon}(\omega,u):=a_u^{N^{t,\varepsilon}(\omega)}
\]
defines an admissible post-$t$ control and satisfies
\[
\Gamma(\hat F(t,\cdot),x;a^{t,\varepsilon})
\ge v(\hat F(t,\cdot),x)-\varepsilon.
\]
Applying \eqref{value-coefficient-stability} twice,
\[
\Gamma(\overline F,x;a^{t,\varepsilon})
\ge \hat V(x)-\varepsilon-\frac{\delta_t}{2r},
\]
and therefore
\[
\Phi_t(x)\ge
\bE_{\sF_t}\!\left[\hat V(x)-\varepsilon-\frac{\delta_t}{2r}\right].
\]
Together with \eqref{full-information-upper-bound}, this gives
\begin{equation*}
\bE\!\left[\bE_{\sF_t}[\hat V(x)]-\Phi_t(x)\right]
\le\frac{\bE[\delta_t]}{2r}\longrightarrow0.
\end{equation*}
Since $\hat V(x)$ is bounded and $\sF_\infty$-measurable, martingale convergence yields
$\bE_{\sF_t}[\hat V(x)]\to\hat V(x)$ in $L^1$. Hence $\Phi_t(x)\to\hat V(x)$ in $L^1$, and \eqref{first-comparison} proves \eqref{value-asymptotic-limit}.
\end{proof}

\begin{rmk}
If information updating ceases at some finite $t_0$, so that
$\hat F(t,x)=\hat F(t_0,x)$ for all $t\ge t_0$, then Proposition~\ref{prop_asym} applies with
$\overline F(x)=\hat F(t_0,x)$. Consequently,
\[
\tilde V(t,x)=e^{-r(t-t_0)}\tilde V(t_0,x)=e^{-rt}\hat V(x),
\qquad t\ge t_0,
\]
where $\hat V$ is the unique pathwise classical solution of \eqref{HJ-eq-asymptotic}.
\end{rmk}

\subsubsection{Asymptotics of the optimal control}

We next study the large-$x$ behavior of the optimal feedback associated with the limiting value function $\hat V$. This requires additional regularity of the limiting survival function.

\begin{ass}\label{F_c1}
The limiting survival function satisfies
$\overline F(\omega,\cdot)\in C^1(\bR^+)$ for almost every $\omega$.
\end{ass}

Let us now define the pathwise hazard rate
\[
h(\omega,x):=-\frac{\partial_x\overline F(\omega,x)}{\overline F(\omega,x)},
\qquad x\in\bR^+.
\]
Under Assumptions \ref{assumption_asym} and \ref{F_c1}, $\overline F(\omega,\cdot)$ is positive and non-increasing, so $h\ge0$ a.e. The next proposition identifies the limiting feedback through the asymptotic hazard rate.

\begin{prop}\label{prop_asym_con}
Suppose that Assumptions \ref{assumption}, \ref{assumption_asym}, and \ref{F_c1} hold and, for almost every $\omega$,
\[
\lim_{x\to\infty}h(\omega,x)=\lambda(\omega)\in[0,\infty].
\]
Let $\alpha^*$ be the optimal feedback associated with $\hat V$. Then, for a.e. $\omega$,
\[
\lim_{x\to\infty}\alpha^*(\omega,x)=
\begin{cases}
\displaystyle\frac{1}{1+\sqrt{1+\lambda(\omega)/r}},&\lambda(\omega)<\infty,\\[2mm]
0,&\lambda(\omega)=\infty.
\end{cases}
\]
\end{prop}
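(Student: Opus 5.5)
The plan is to work pathwise. Fix $\omega$ outside a null set so that $\overline F(\omega,\cdot)$ is positive, $C^1$, non-increasing and Lipschitz, and so that $h(\omega,x)\to\lambda(\omega)$. Write $F:=\overline F(\omega,\cdot)$ and $V:=\hat V(\omega,\cdot)$. By Theorem~\ref{thm_deterministic} and Corollary~\ref{corollary_deterministic_classical}, applied with $\tilde F=F$ as in Proposition~\ref{prop_asym}, $V$ is the $C^1$ solution of
\[
V'=2\sqrt{rVF}-F, \qquad 0<V<\frac{F}{4r},
\]
and the optimal feedback is $\alpha^*=\sqrt{rV/F}$ by \eqref{optimal_feedback_deterministic}.

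\textbf{Step 1: reduce to a scalar ODE.} Introduce $q:=V/F$, so that $\alpha^*=\sqrt{rq}$ and $0<q<1/(4r)$. Since $F\in C^1$ by Assumption~\ref{F_c1}, differentiating and using the ODE for $V$ gives
\[
q'=\frac{V'}{F}-\frac{VF'}{F^2}=2\sqrt{rq}-1+h(x)\,q=:g(x,q).
\]
The claim is then equivalent to $q(x)\to q^*$. When $\lambda<\infty$, $q^*$ is the unique positive root of $2\sqrt{rq}+\lambda q=1$. When $\lambda=\infty$, $q^*=0$. For $\lambda<\infty$, set $a=\sqrt{rq^*}$; then $2a+(\lambda/r)a^2=1$, whose positive root is
\[
a=\frac{\sqrt{1+\lambda/r}-1}{\lambda/r}=\frac{1}{1+\sqrt{1+\lambda/r}}.
\]
This is the asserted limit, and the formula remains valid at $\lambda=0$, where $a=1/2$.

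\textbf{Step 2: a forward-repulsion argument (case $\lambda<\infty$).} The map $q\mapsto g(x,q)$ is strictly increasing, so the fixed point is repelling in the forward $x$-direction. Only global boundedness $0<q<1/(4r)$ is then needed to pin $q$ to $q^*$. Fix $\eta>0$ and choose $x_0$ such that $|h(x)-\lambda|\le\kappa$ for $x\ge x_0$, with $\kappa$ small relative to $\eta$.

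Suppose $q(x_1)\ge q^*+\eta$ for some $x_1\ge x_0$. Then $g(x,q)\ge c(\eta)>0$ whenever $q\ge q^*+\eta$ and $x\ge x_0$: the function $2\sqrt{rq}+(\lambda-\kappa)q-1$ is positive there for small $\kappa$. Hence $q$ stays above $q^*+\eta$ and grows at least linearly, contradicting $q<1/(4r)$.

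Symmetrically, suppose $q(x_1)\le q^*-\eta$. Then $q'\le -c(\eta)<0$ as long as $q\le q^*-\eta$. So $q$ decreases linearly and eventually becomes negative, contradicting $q>0$.

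Therefore $|q-q^*|<\eta$ for all $x\ge x_0$, and continuity of $q\mapsto\sqrt{rq}$ gives the limit of $\alpha^*$.

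\textbf{Step 3: the case $\lambda=\infty$.} Fix $\eta>0$ and choose $x_0$ with $h\ge 2/\eta$ on $[x_0,\infty)$. If $q(x_1)\ge\eta$ for some $x_1\ge x_0$, then $q'\ge -1+h\eta\ge 1$ as long as $q\ge\eta$. Hence $q$ increases without bound, again contradicting $q<1/(4r)$. So $q<\eta$ eventually, which gives $q\to0$ and $\alpha^*\to0$.

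\textbf{Main obstacle.} The main point requiring care is the uniformity in Step 2 near the boundary of the region $\{q\ge q^*+\eta\}$. A lower bound $c(\eta)$ uniform in $x\ge x_0$ must come from $|h-\lambda|\le\kappa$. I would obtain it by monotonicity in $q$ of $2\sqrt{rq}+\mu q-1$ together with continuity of this expression in $\mu$ near $\lambda$. Measurability in $\omega$ is not needed, since the statement is pathwise a.e.
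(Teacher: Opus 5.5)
Your proof is correct, but it takes a different route from the paper.

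The paper argues on the control side. It writes $U=\hat V/\overline F$ as a supremum of discounted rewards weighted by $\overline F(x+Y^\alpha_s)/\overline F(x)$. It then uses $a\le h\le b$ on $[R,\infty)$ to sandwich this ratio between $e^{-by}$ and $e^{-ay}$. Finally it compares with the exponential benchmark problems, whose values $U_q=c(q)^2/r$ come from their stationary HJ equations. This gives $c(b)\le\alpha^*(x)\le c(a)$ for $x\ge R$, and the limit follows.

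You instead work with the scalar ODE $q'=2\sqrt{rq}-1+h(x)q$ for $q=\hat V/\overline F$. Its frozen-coefficient equilibria $q^*_\mu$ are exactly the paper's benchmark values $U_\mu$. Because the right-hand side is increasing in $q$, these equilibria are forward-repelling, so the a priori bound $0<q<1/(4r)$ alone pins $q$ to $q^*$.

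What your route buys:
\begin{itemize}
\item It avoids solving and verifying the exponential benchmark control problems; only an algebraic root is needed.
\item It makes transparent why only the tail of $h$ matters: boundedness at infinity, not the initial value, selects the solution.
\item The same repulsion argument with $\mu=a$ and $\mu=b$ recovers the paper's two-sided bound $c(b)\le\alpha^*\le c(a)$.
\end{itemize}

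What the paper's route buys: it is a pure comparison of control problems, using only monotonicity of the value in the reward weight. It never differentiates $\hat V/\overline F$ or analyzes a phase line, so it would carry over to settings where the normalized value is known only through its variational representation.
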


\begin{proof}
Fix $\omega$ outside a null set and suppress it from the notation. For $\alpha\in\cA_d$, set
\[
Y_s^\alpha:=\int_0^s\alpha_u\,\ud u,
\qquad
X_s^{0,x;\alpha}=x+Y_s^\alpha,
\]
and define the normalized value
\begin{equation*}
U(x):=\frac{\hat V(x)}{\overline F(x)}
=\sup_{\alpha\in\cA_d}\int_0^\infty e^{-rs}\alpha_s(1-\alpha_s)
\frac{\overline F(x+Y_s^\alpha)}{\overline F(x)}\,\ud s.
\end{equation*}
By \eqref{HJ-eq-asymptotic},
\begin{equation}\label{normalized-feedback-formula}
\alpha^*(x)
=\frac{\big(\partial_x\hat V(x)+\overline F(x)\big)^+}{2\overline F(x)}
=\sqrt{\frac{r\hat V(x)}{\overline F(x)}}
=\sqrt{rU(x)}.
\end{equation}

For $q\ge0$, consider the exponential benchmark $f_q(x)=e^{-qx}$ and define
\[
U_q:=\sup_{\alpha\in\cA_d}\int_0^\infty e^{-rs}\alpha_s(1-\alpha_s)e^{-qY_s^\alpha}\,\ud s,
\qquad
c(q):=\frac{1}{1+\sqrt{1+q/r}}.
\]
Its stationary HJ equation is
\[
rU_q=\max_{a\in[0,1]}\{a(1-a)-qaU_q\},
\]
where the case $q=0$ gives $U_0=1/(4r)$ and $\alpha_0^*=1/2$. Direct substitution gives
\begin{equation}\label{exponential-benchmark-value}
U_q=\frac{c(q)^2}{r},
\qquad
\alpha_q^*=c(q).
\end{equation}

Suppose that for some $0\le a\le b<\infty$ there exists $R>0$ such that
\[
a\le h(z)\le b,
\qquad z\ge R.
\]
Then for $x\ge R$ and $y\ge0$,
\begin{equation*}
e^{-by}\le\frac{\overline F(x+y)}{\overline F(x)}\le e^{-ay}.
\end{equation*}
Substituting $y=Y_s^\alpha$, integrating, and taking suprema yields
\[
U_b\le U(x)\le U_a,
\qquad x\ge R.
\]
Hence, by \eqref{normalized-feedback-formula} and \eqref{exponential-benchmark-value},
\begin{equation*}
c(b)\le\alpha^*(x)\le c(a),
\qquad x\ge R.
\end{equation*}

If $\lambda<\infty$, then for every $\varepsilon>0$ and all sufficiently large $x$,
\[
(\lambda-\varepsilon)^+\le h(x)\le\lambda+\varepsilon.
\]
Thus
\[
c(\lambda+\varepsilon)
\le\alpha^*(x)
\le c((\lambda-\varepsilon)^+),
\]
and continuity of $c$ gives $\alpha^*(x)\to c(\lambda)$. If $\lambda=\infty$, then for every $M>0$, eventually $h(x)\ge M$, so
\[
0\le\limsup_{x\to\infty}\alpha^*(x)\le c(M)\to0
\qquad\text{as }M\to\infty.
\]
\end{proof}

\subsubsection{Numerical illustrations}

We finally illustrate the asymptotic behavior of $\alpha^*$ for four deterministic survival functions. We set $r=0.1$ and consider the following pairs $(\overline F_i,\hat V_i)$ solving \eqref{HJ-eq-asymptotic}:
\begin{enumerate}
\item[(A)] $\displaystyle \lim_{x\to\infty}h_1(x)=1$, with
\[
\overline F_1(x)=e^{-x},
\qquad
\hat V_1(x)=e^{-x}(\sqrt{r+1}-\sqrt r)^2;
\]

\item[(B)] $\displaystyle \lim_{x\to\infty}h_2(x)=0$, with
\[
\overline F_2(x)=(1+x)^{-1}
\left(\frac{\sqrt r+\sqrt{r+(1+x)^{-1}}}{\sqrt r+\sqrt{r+1}}\right)^2,
\qquad
\hat V_2(x)=\frac{1}{(1+x)(\sqrt r+\sqrt{r+1})^2};
\]

\item[(C)] $\displaystyle \lim_{x\to\infty}h_3(x)=\infty$, with
\[
\overline F_3(x)=e^{-x-x^2/2}
\left(\frac{\sqrt r+\sqrt{r+1+x}}{\sqrt r+\sqrt{r+1}}\right)^2,
\qquad
\hat V_3(x)=\frac{e^{-x-x^2/2}}{(\sqrt r+\sqrt{r+1})^2};
\]

\item[(D)] $\displaystyle \lim_{x\to\infty}h_4(x)$ does not exist, with
\[
\overline F_4(x)=e^{-x-\frac14+\frac14\cos x}
\left(\frac{\sqrt r+\sqrt{r+1+\frac14\sin x}}{\sqrt r+\sqrt{r+1}}\right)^2,
\qquad
\hat V_4(x)=\frac{e^{-x-\frac14+\frac14\cos x}}{(\sqrt r+\sqrt{r+1})^2}.
\]
\end{enumerate}

We plot $\alpha_i^*(x)=\sqrt{r\hat V_i(x)/\overline F_i(x)}$, $i=1,2,3,4$, on $[0,50]$ using $10^4$ grid points, together with the corresponding optimal trajectories for the realization $\overline X(\omega)=10$. The trajectories are computed with MATLAB's \texttt{ode45} using $\mathrm{RelTol}=10^{-10}$ and $\mathrm{AbsTol}=10^{-12}$.

\begin{figure}[H]
\centering
\includegraphics[scale=0.15]{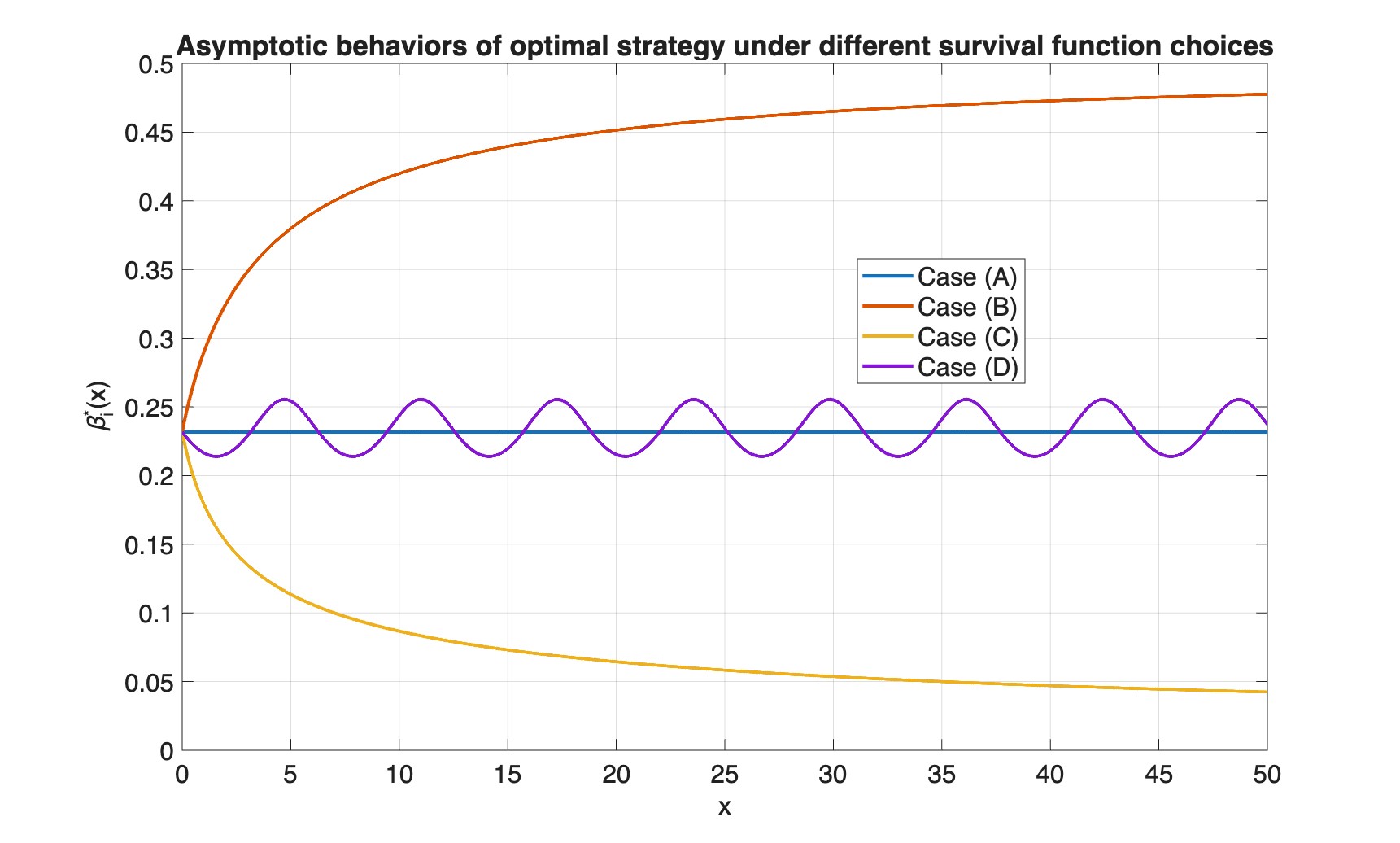}
\includegraphics[scale=0.15]{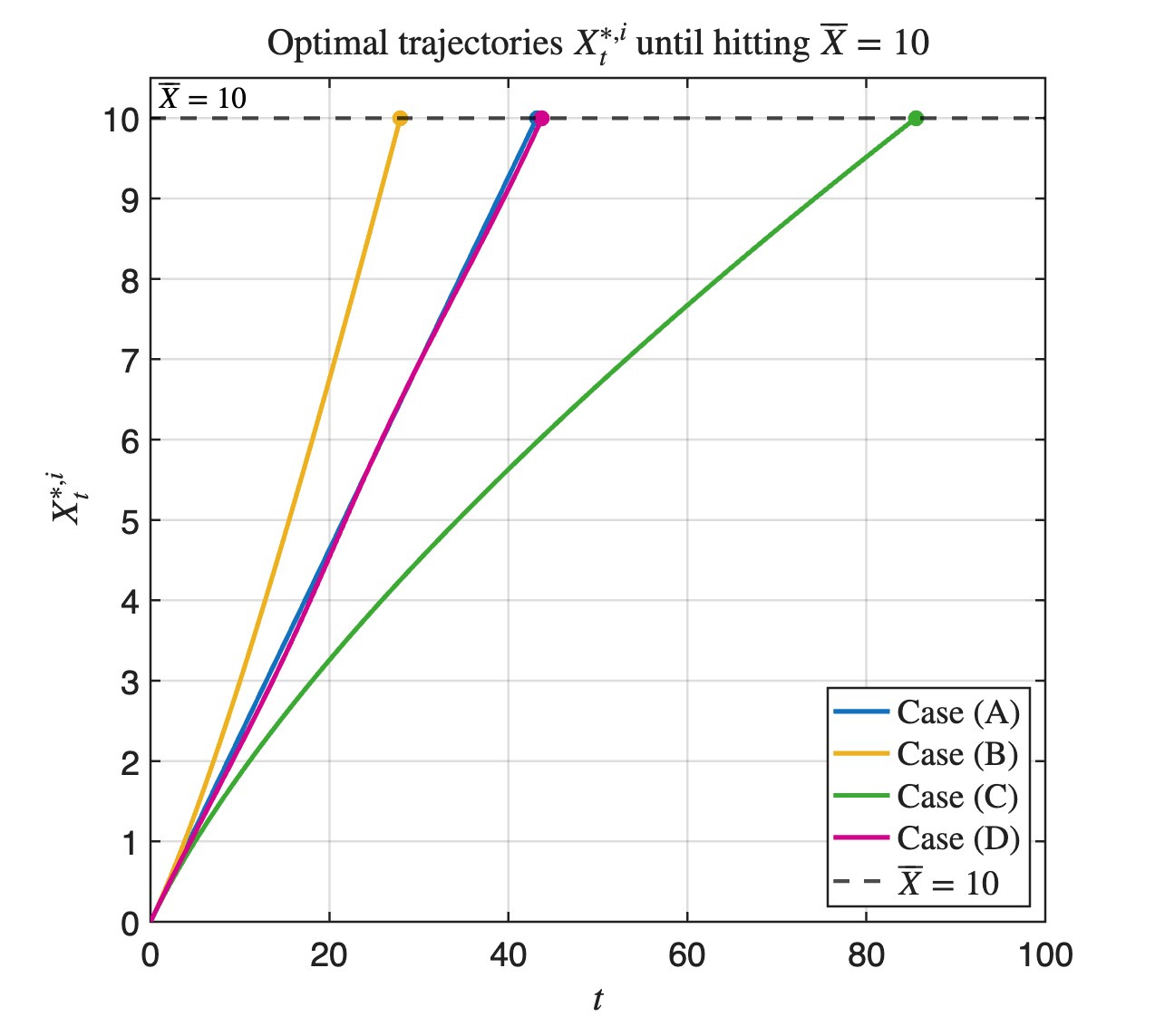}
\caption{Asymptotic behavior of optimal strategies (left) and corresponding optimal state trajectories (right).}
\label{fig:asymptotic_op_con_deterministic}
\end{figure}


{
\let\oldthebibliography\thebibliography
\renewcommand\thebibliography[1]{%
    \oldthebibliography{#1}%
    \setlength{\itemsep}{-1pt}%
    \setlength{\parskip}{0pt}%
}
\bibliographystyle{siam}
\bibliography{ref}
}

\end{document}